\documentclass[9pt,shortpaper,twoside,web]{ieeecolor}
\usepackage{amsmath,amssymb,amsfonts}
\usepackage{generic}
\usepackage{enumerate}
\usepackage{graphicx}
\usepackage{cite}
\usepackage{mathtools}

\newtheorem{theorem}{Theorem}
\newtheorem{definition}{Definition}
\newtheorem{proposition}{Proposition}
\newtheorem{lemma}{Lemma}
\newtheorem{corollary}{Corollary}
\newtheorem{example}{Example}
\newtheorem{problem}{Problem}
\newtheorem{remark}{Remark}

\begin{document}

\title{Quotients of Probabilistic Boolean Networks}

\author{Rui~Li, Qi~Zhang, and Tianguang~Chu%
\thanks{R. Li is with the School of Mathematical Sciences, Dalian University
of Technology, Dalian 116024, China (e-mail: rui\_li@dlut.edu.cn).}%
\thanks{Q. Zhang is with the School of Information Technology and Management,
University of International Business and Economics, Beijing 100029,
China (zhangqi@uibe.edu.cn).}%
\thanks{T. Chu is with the College of Engineering, Peking University,
Beijing 100871, China (e-mail: chutg@pku.edu.cn).}}

\maketitle

\begin{abstract}
A probabilistic Boolean network (PBN) is a discrete-time system
composed of a collection of Boolean networks between which the PBN
switches in a stochastic manner. This paper focuses on the study of
quotients of PBNs. Given a PBN and an equivalence relation on its
state set, we consider a probabilistic transition system that is
generated by the PBN; the resulting quotient transition system then
automatically captures the quotient behavior of this PBN. We
therefore describe a method for obtaining a probabilistic Boolean
system that generates the transitions of the quotient transition
system. Applications of this quotient description are discussed, and
it is shown that for PBNs, controller synthesis can be performed
easily by first controlling a quotient system and then lifting the
control law back to the original network. A biological example is
given to show the usefulness of the developed results.
\end{abstract}

\begin{IEEEkeywords}
Probabilistic Boolean networks, probabilistic transition systems,
quotienting, stabilization, optimal control.
\end{IEEEkeywords}

\IEEEpeerreviewmaketitle

\section{Introduction}

\label{sec1}

Mathematical modeling of biological systems is a valuable avenue for
understanding complex biological systems and their behaviors. One
powerful approach to modeling biological systems is through a
Boolean model, where each system component is characterized with a
binary variable. Boolean network (BN) modeling can capture the
system's behavior without the need for much kinetic detail, making
it a practical choice for systems where enough kinetic information
may not be at disposal \cite{albert2014}. A BN is typically placed
in the form of a (deterministic) nonlinear system (with a finite
state space); while interestingly, based on an algebraic state
representation approach, the Boolean dynamics can be exactly mapped
into the standard discrete-time linear dynamics \cite{chengbook}.
This formal simplicity makes it relatively easy to formulate and
solve classical control-theoretic problems for BNs, and thereby has
stimulated a great many interesting subsequent developments in this
area
\cite{laschov2014,bof2015,fornasini2015,lu2016b,zhang2016c,guo2017,li2017b,liang2017b,liu2017,cheng2018,rafimanzelat2019,wang2019b,yu2019,zhang2020b,zhang2020,li2020c,li2020d,zhong2020}.
For some recent work on the analysis and control of BNs based on
other approaches, see, e.g., \cite{weiss2019,chen2020b,zhang2020c}.

A probabilistic Boolean network (PBN) is a stochastic extension of
the classical BN. It can be considered as a collection of BNs
endowed with a probability structure describing the likelihood with
which a constituent network is active. PBNs possess not only the
appealing properties of BNs such as requiring few kinetic
parameters, but also are able to cope with uncertainties, both in
the experimental data and in the model selection
\cite{shmulevich2002a}. The algebraic state representation has also
proved a powerful framework for studying control-related problems in
PBNs. Examples of recent studies based on the algebraic
representation approach include investigations of network robustness
and synchronization \cite{li2020b,zhu2020,chen2018}, controllability
and stabilizability
\cite{meng2019b,guo2019,liu2019,zhou2020,huang2020}, observability
and detectability \cite{zhou2019,fornasini2020,wang2019}, optimal
control \cite{wu2020}, just to quote a few.

It is a well-known fact that the analysis of control systems and
synthesis of controllers become increasingly difficult as the
dimension of the system gets larger. It is then desirable to have a
methodology that reduces the size of control systems while
preserving the properties relevant for analysis or synthesis.
Quotient systems can be seen as lower dimensional models that may
still contain enough information about the original system. A
stability analysis of BNs based on a quotient map was presented in
\cite{wang2014} and \cite{guo2018c}, where it was shown that the
stability of the original BN can be inferred from the analysis of a
specific quotient dynamics. Our recent work described a process for
obtaining quotients of BNs \cite{r.li2020}. A relation-based
transformation strategy was introduced, which is able to transform a
BN expressed in algebraic form into a quotient Boolean system suited
for use. The present paper focuses on the study of quotients of
PBNs. Given a PBN, together with an equivalence relation on the
state set, we consider a probabilistic transition system
$\mathcal{T}$ that is generated by the PBN. The equivalence relation
then naturally induces a partition of the state space of
$\mathcal{T}$, and the corresponding quotient system fully captures
the quotient dynamics of the PBN concerned. We therefore develop a
probabilistic Boolean system that produces the transitions of the
quotient transition system. As an application of this quotient
description, we apply the proposed technique to solve two typical
control problems, namely the stabilization and optimal control
problems. The results show us that through the use of an
appropriately defined relation, the proposed quotient system can
indeed preserve the system property relevant to control design.
Consequently, synthesizing controllers for a PBN can be done easily
by first designing control polices on the quotient and then inducing
the control polices back to the original network.

The remainder of this paper is organized as follows.
Section~\ref{sec2} contains the basic notation and briefly reviews
PBNs and probabilistic transition systems. Section~\ref{sec3}
details a process for generating quotients of PBNs given that the
networks are represented in algebraic form. Section~\ref{sec4}
discusses the use of the proposed quotient systems for control
design and presents applications to stabilization and optimal
control problems. Section~\ref{sec5} gives a biological example
illustrating the developed results. A summary of the paper is given
in the last section.

\section{Notation and Preliminaries}

\label{sec2}

\subsection{Notation}

\label{sec2.1}

The following notation is used throughout the paper. The symbol
$\delta_k^i$ denotes the $i$th $k \times 1$ canonical basis vector
(all entries of $\delta_k^i$ are $0$ except for the $i$th one, which
is $1$), $\Delta_k$ denotes the set consisting of the canonical
vectors $\delta_k^1, \ldots, \delta_k^k$, and $\mathcal{L}^{k \times
r}$ denotes the set of all $k \times r$ matrices whose columns are
canonical basis vectors of length $k$. Elements of $\mathcal{L}^{k
\times r}$ are called logical matrices (of size $k \times r$). A
$(0,1)$-matrix is a matrix with all entries either $0$ or $1$. The
$(i,j)$-entry of a matrix $A$ is denoted by $(A)_{ij}$. Given two
$(0,1)$-matrices $A$ and $B$ of the same size, by $A \leq B$ we mean
that if $(A)_{ij} = 1$ then $(B)_{ij} = 1$ for every $i$ and $j$.
The meet of $A$ and $B$, denoted by $A \wedge B$, is the
$(0,1)$-matrix whose $(i,j)$-entry is $(A)_{ij} \wedge (B)_{ij}$.
The (\textit{left}) \textit{semitensor product} \cite{chengbook} of
two matrices $C$ and $D$ of sizes $k_1 \times r_1$ and $k_2 \times
r_2$, respectively, denoted by $C \ltimes D$, is defined by $C
\ltimes D = (C \otimes I_{l/ r_1})(D \otimes I_{l/k_2})$, where
$\otimes$ is the Kronecker product of matrices, and $I_{l/ r_1}$ and
$I_{l/k_2}$ are the identity matrices of orders $l/r_1$ and $l/k_2$,
respectively, with $l$ being the least common multiple of $r_1$ and
$k_2$.

\subsection{Probabilistic Boolean Networks}

\label{sec2.2}

A PBN is described by the following stochastic equation
\begin{equation} \label{eq1}
X(t+1) = f_{\theta (t)} (X(t), U(t)) ,
\end{equation}
where $X(t) = [X_1(t), \ldots, X_n(t)]^{\top} \in \{1,0\}^n$ is the
state, $U(t) = [U_1(t), \ldots, U_m(t)]^{\top} \in \{1,0\}^m$ is the
control, $\{\theta(t) \colon t = 0,1,\ldots\}$ is a stochastic
process consisting of independent and identically distributed
(i.i.d.) random variables taking values in a finite set $\mathbb{S}
= \{1,\ldots, S\}$, and $f_i$ ($i = 1, \ldots, S$) are Boolean
functions from $\{1,0\}^{n+m}$ to $\{1,0\}^n$. By performing a
matrix expression of Boolean logic and using the semitensor product,
model (\ref{eq1}) can be cast in a form similar to a random jump
linear system with i.i.d. jumps. To be more precise, we let $x(t) =
x_1(t) \ltimes \cdots \ltimes x_n(t)$ and $u(t) = u_1(t) \ltimes
\cdots \ltimes u_m(t)$, where $x_i(t) = [X_i(t), \neg X_i(t)]^\top$
and $u_j(t) = [U_j(t), \neg U_j(t)]^\top$. Then it is shown that the
PBN (\ref{eq1}) satisfies the following algebraic description
\begin{equation*}
x(t+1) = F_{\theta(t)} \ltimes u(t) \ltimes x(t) ,
\end{equation*}
where $x(t) \in \Delta_N$, $u(t) \in \Delta_M$, and $F_i \in
\mathcal{L}^{N \times NM}$ for $i = 1, \ldots, S$, with $N \coloneqq
2^n$ and $M \coloneqq 2^m$. For more information about obtaining the
algebraic description, as well as the properties of the semitensor
product, the reader is referred to, e.g., the monograph of Cheng
\textit{et al.} \cite{chengbook}.

\subsection{Probabilistic Transition Systems}

\label{sec2.3}

Our discussion of quotients of PBNs will draw on the notion of
probabilistic transition systems. Recall that a probability
distribution over a finite set $Q$ is a function $\mu \colon Q
\rightarrow [0,1]$ such that $\sum_{q \in Q} \mu(q) = 1$. The set of
all probability distributions over $Q$ is denoted by
$\text{Dist}(Q)$. We state the following definition.

\begin{definition} [see, e.g., \cite{baier2000,hermanns2011}] \label{def1}
A~\textit{probabilistic transition system} (or \textit{probabilistic
automaton}) is a tuple $\mathcal{T} = (Q, Act, \rightarrow)$, where
$Q$ is a finite set of states, $Act$ is a finite set of actions, and
$\rightarrow \, \subseteq Q \times Act \times \text{Dist}(Q)$ is a
probabilistic transition relation.
\end{definition}

Intuitively, a transition $(q, \alpha, \mu) \in \rightarrow$ means
that in the state $q$ an action $\alpha$ can be executed after which
the probability to move to a state $q' \in Q$ is $\mu (q')$.
Following standard conventions we denote $q \xrightarrow{\alpha}
\mu$ if $(q, \alpha, \mu) \in \rightarrow$. A probabilistic
transition system is \textit{reactive}\footnote{We note that some
authors use the terminology ``reactive" for a probabilistic
transition system where there is at most one (but perhaps no)
transition on a given action from a given state.} if for any state
$q \in Q$ and any action $\alpha \in Act$ there exists a unique $\mu
\in \text{Dist}(Q)$ such that $q \xrightarrow{\alpha} \mu$
\cite{feng2014}. As we will explain in the following section, every
PBN corresponds naturally to a probabilistic transition system which
is always reactive.

Recall that an equivalence relation $\mathcal{R}$ on $Q$ is a
reflexive, symmetric, and transitive binary relation on $Q$. Let $Q
/ \mathcal{R}$ be the quotient set of $Q$ by $\mathcal{R}$ (i.e.,
the set of all equivalence classes $[q] = \{p \in Q \colon (q,p) \in
\mathcal{R}\}$ for $q \in Q$). Then every $\mu \in \text{Dist}(Q)$
induces a probability distribution $\bar{\mu}$ over $Q /
\mathcal{R}$ given by $\bar{\mu} ([q]) = \sum_{p \in [q]} \mu (p)$.
The following definition of a quotient transition system is taken
from \cite[Definition 12]{zhang2016d}, but slightly adjusted to our
notation.

\begin{definition} \label{def2}
Let $\mathcal{T} = (Q, Act, \rightarrow)$ be a probabilistic
transition system and let $\mathcal{R}$ be an equivalence relation
on $Q$. The \textit{quotient transition system}
$\mathcal{T}/\mathcal{R}$ is defined by $\mathcal{T}/\mathcal{R} =
(Q/ \mathcal{R}, Act, \rightarrow_{\mathcal{R}})$, where the
probabilistic transition relation $\rightarrow_{\mathcal{R}}$ is
defined as follows: for any $[q] \in Q/ \mathcal{R}$ and $\pi \in
\text{Dist}(Q/ \mathcal{R})$, $[q]
\xrightarrow{\alpha}_{\mathcal{R}} \pi$ if and only if for every $p
\in [q]$ there exists a $\mu \in \text{Dist}(Q)$ inducing $\pi$ such
that $p \xrightarrow{\alpha} \mu$.
\end{definition}

It follows from the above definition that an action $\alpha$ can be
executed in $[q]$ just in case: (i) $\alpha$ can be executed in
every state in $[q]$, and (ii) all states in $[q]$ have identical
transition probabilities to each of the equivalence classes after
the action $\alpha$. Furthermore, the transition probability in
$\mathcal{T}/{\mathcal{R}}$ from $[q]$ to $[q']$ is simply the
probability with which $\mathcal{T}$ transitions from $q$ (or any
other state belonging to $[q]$) to the equivalence class $[q']$.
Note that $\mathcal{T}/ \mathcal{R}$ may not be reactive even if
$\mathcal{T}$ is. Indeed, it is possible that there are two states
in a class, say $[q]$, which have different probabilities of
transitioning to some equivalence class under a given action, say
$\alpha$, thus violating the above condition (ii). Then the action
$\alpha$ is not executable in $[q]$ and, consequently, the quotient
transition system $\mathcal{T}/ \mathcal{R}$ is not reactive.

In the next section, we will use a similar framework to study
quotients of a PBN.

\section{Construction of Quotients}

\label{sec3}

Let us consider a PBN described by\footnote{Here $N$ and $M$ are in
fact certain powers of $2$, but we do not need this fact in our
argument.}
\begin{equation} \label{eq2}
\Sigma \colon \;\, x(t+1) = F_{\theta(t)} \ltimes u(t) \ltimes x(t)
, \;\; x \in \Delta_N , \;\; u \in \Delta_M .
\end{equation}
As assumed above, $\{\theta(t)\}$ is an i.i.d. process taking
finitely many values $1, \ldots, S$ with associated probabilities
$p_1, \ldots, p_S$; and $F_i \in \mathcal{L}^{N \times NM}$ for each
$1 \leq i \leq S$. We define a column-stochastic matrix\footnote{A
matrix is \textit{column-stochastic} if all entries are nonnegative
and each column sums to one.} $P = p_1 F_1 + p_2 F_2 + \cdots + p_S
F_S $, and for each $u \in \Delta_M$ let
\begin{equation} \label{eq2a}
P(u) = P \ltimes u .
\end{equation}
The $(i,j)$-entry of $P(u)$ then gives the transition probability of
$\Sigma$ from its state $\delta_N^j$ to state $\delta_N^i$ when
input $u$ is applied (see, e.g., \cite{chengbook}). The above matrix
$P$ is called the \textit{transition probability matrix} of $\Sigma$
\cite{zhou2020}. Note that any column-stochastic matrix $P$ of size
$N \times NM$ can be interpreted as the transition probability
matrix of a PBN of the form (\ref{eq2}). Indeed, since every
column-stochastic matrix is a convex combination of logical matrices
(cf. the algorithms in \cite{ching2009} and \cite{kobayashi2017}),
there exist logical matrices $F_1, \ldots, F_S$ and positive reals
$\lambda_1, \ldots, \lambda_S$ such that $P = \sum_{i=1}^S \lambda_i
F_i$ and $\sum_{i=1}^S \lambda_i = 1$. Let $\{\theta(t)\}$ be the
i.i.d. process with the probability that $\theta(t) = i$ equal to
$\lambda_i$ for all $t \geq 0$. Then the PBN described in
(\ref{eq2}) has as its transition probability matrix the matrix $P$.

In order to investigate quotients of (\ref{eq2}), we first recall
that every equivalence relation $\mathcal{R} \subseteq \Delta_N
\times \Delta_N$ can be viewed as induced by a logical matrix $C$
with $N$ columns and full row rank, by saying
\begin{equation} \label{eq3}
(x, x') \in \mathcal{R} \Longleftrightarrow C x = C x' .
\end{equation}
The matrix $C$ is easily derived from the matrix representation of
$\mathcal{R}$. Indeed, let $A_{\mathcal{R}}$ be the $N \times N$
matrix with entries
\begin{equation*}
(A_{\mathcal{R}})_{ij} =
\begin{cases}
1 & \text{if $(\delta_N^i , \delta_N^j) \in
\mathcal{R}$}, \\
0 & \text{otherwise}.
\end{cases}
\end{equation*}
If $C$ is a matrix having the same set of distinct rows as
$A_{\mathcal{R}}$, but with no rows repeated, then it must be a
logical matrix of full row rank and fulfilling condition (\ref{eq3})
(see \cite[Lemma 4.6]{r.li2021} where it is shown that such a $C$ is
a logical matrix with no zero rows, hence of full row rank, and
(\ref{eq3}) holds for that $C$). Note that, for an equivalence
relation $\mathcal{R} \subseteq \Delta_N \times \Delta_N$ induced by
a matrix $C \in \mathcal{L}^{\widetilde{N} \times N}$ of full row
rank, the quotient set $\Delta_N / \mathcal{R}$ has cardinality
$\widetilde{N}$, and the correspondence $[x] \mapsto Cx$ gives a
bijection between $\Delta_N / \mathcal{R}$ and
$\Delta_{\widetilde{N}}$.

We now consider quotients of (\ref{eq2}). The PBN (\ref{eq2})
naturally generates a probabilistic transition system
$\mathcal{T}(\Sigma) = (\Delta_N, \Delta_M, \rightarrow)$, where the
transition relation $\rightarrow$ is defined as follows: for $a \in
\Delta_N$, $u \in \Delta_M$, and $\mu \in \text{Dist}(\Delta_N)$,
\begin{equation*}
a \xrightarrow{u} \mu \Longleftrightarrow \text{$\mu (x) = x^{\top}
P(u) a$ for all $x \in \Delta_N$} .
\end{equation*}
Here, $x^{\top} P(u) a$ is just the transition probability of
$\Sigma$ moving from $a$ to $x$ under input $u$, since it coincides
with the $(i,j)$-entry of $P(u)$ when $x = \delta_N^i$ and $a =
\delta_N^j$. The above definition of $\rightarrow$ then says that,
for each state $a \in \Delta_N$ and any $u \in \Delta_M$, the
probability of $\mathcal{T}(\Sigma)$ transitioning to the next state
$x$ is exactly the same as the probability of $\Sigma$ transitioning
from $a$ to $x$. Clearly, the transition system
$\mathcal{T}(\Sigma)$ generated in this way is reactive. In view of
the following discussion, we mention that the converse of this fact
is also true. Indeed, given a reactive transition system
$\mathcal{T}' = (\Delta_N, \Delta_M, \rightarrow ')$, for each $u
\in \Delta_M$ define $P'(u)$ to be the $N \times N$ matrix with
$(i,j)$-entry $(P' (u))_{ij} = \mu (\delta_N^i)$, where $\mu$ is the
unique probability distribution on $\Delta_N$ such that $\delta_N^j
\mathrel{{\xrightarrow{u}}{}'} \mu$. Set $P' =$ $\left[
P'(\delta_M^1) \; \cdots \; P'(\delta_M^M) \right]$. Then $P'$ is
column-stochastic (since each $P'(u)$ is), and the system
$\mathcal{T}'$ can be considered as generated by a PBN whose
transition probability matrix is $P'$.

Let $\mathcal{R}$ be an equivalence relation on $\Delta_N$ and
consider the quotient transition system $\mathcal{T}(\Sigma)/
\mathcal{R} = (\Delta_N / \mathcal{R}, \Delta_M,
\rightarrow_{\mathcal{R}})$. For the analysis to remain in the
Boolean context, we expect that the transitions of
$\mathcal{T}(\Sigma) / \mathcal{R}$ are also generated by a Boolean
system\footnote{In the following, we use the term ``probabilistic
Boolean system" to refer to a stochastic system of the form
(\ref{eq2}) where $N$ and $M$ are not restricted to be powers of
$2$.} of the form (\ref{eq2}). By the above argument, this is the
case exactly when $\mathcal{T}(\Sigma) / \mathcal{R}$ is reactive,
or equivalently, when
\begin{multline} \label{eq4}
\sum_{x \in [b]} x^{\top} P(u) a = \sum_{x \in [b]} x^{\top} P(u) a'
, \;\; \forall u \in \Delta_M, \; \forall [b] \in \Delta_N /
\mathcal{R}, \\
\text{$\forall a, a' \in \Delta_N$ with $(a, a') \in \mathcal{R}$}
\end{multline}
(that is, for any control action, states in the same class have the
same transition probabilities to any equivalence class). We
therefore restrict our attention to those $\mathcal{R}$ satisfying
(\ref{eq4}). The following theorem gives a method for constructing a
probabilistic Boolean system that generates the transitions of
$\mathcal{T}(\Sigma) / \mathcal{R}$.

\begin{theorem} \label{thm1}
Consider a PBN $\Sigma$ as in (\ref{eq2}) and let $P(u)$ be as in
(\ref{eq2a}). Suppose that $\mathcal{R}$ is an equivalence relation
on $\Delta_N$ induced by a matrix $C \in \mathcal{L}^{\widetilde{N}
\times N}$ of full row rank, and that property (\ref{eq4}) holds.
Let $\widetilde{C} \in \mathcal{L}^{N \times \widetilde{N}}$ be such
that\footnote{Since $C$ (being logical) has full row rank, the
transpose $C^{\top}$ does not contain zero columns, so such a
$\widetilde{C}$ must exist.} $\widetilde{C} \leq C^{\top}$, and for
each $u \in \Delta_M$ define $\widetilde{P}(u)$ to be the
$\widetilde{N} \times \widetilde{N}$ matrix given by
$\widetilde{P}(u) = C P(u) \widetilde{C}$. Then:
\begin{enumerate} [(a)]
\item Each $\widetilde{P}(u)$ is column-stochastic.

\item Let
\begin{equation*} \hspace{-1.5em}
\Sigma_{\mathcal{R}} \colon \; x_{\mathcal{R}}(t+1) =
\widetilde{F}_{\tilde{\theta}(t)} \ltimes u(t) \ltimes
x_{\mathcal{R}}(t) , \;\, x_{\mathcal{R}} \in \Delta_{\widetilde{N}}
, \;\, u \in \Delta_M
\end{equation*}
be a probabilistic Boolean system that has $\widetilde{P} =
\big[\widetilde{P}(\delta_M^1)\;\; \widetilde{P}(\delta_M^2)$ $
\cdots \;\, \widetilde{P}(\delta_M^M) \big]$ as its transition
probability matrix. For any $a, a' \in \Delta_N$ and any $u \in
\Delta_M$, the transition probability of $\Sigma_{\mathcal{R}}$ from
$Ca$ to $Ca'$ under the input $u$ is equal to the transition
probability of $\Sigma$ moving from $a$ to the equivalence class
$[a'] = \{x \in \Delta_N \colon Cx = Ca'\}$ when $u$ is applied.
\end{enumerate}
\end{theorem}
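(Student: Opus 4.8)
The plan is to express the entries of $\widetilde{P}(u) = C P(u) \widetilde{C}$ directly in terms of class-to-class transition probabilities of $\Sigma$; once this is done, both parts follow with little more than bookkeeping. Two elementary observations drive everything. First, the hypothesis $\widetilde{C} \leq C^{\top}$ makes $\widetilde{C}$ a \emph{selector of class representatives}: each column of $\widetilde{C}$ is a single canonical vector $\delta_N^{k}$, and the inequality forces this $\delta_N^{k}$ to lie in the class that $C$ maps to the corresponding $\delta_{\widetilde{N}}^{l}$; in particular $C \widetilde{C} = I_{\widetilde{N}}$. Second, because $C$ is logical, its $l$th row is the indicator row of the $l$th equivalence class, so that $(\delta_{\widetilde{N}}^{l})^{\top} C = \sum_{x \in [l]} x^{\top}$, where $[l]$ denotes the equivalence class sent by $C$ to $\delta_{\widetilde{N}}^{l}$.

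I would then read off the $(l,m)$-entry of $\widetilde{P}(u)$. Writing $a_m = \widetilde{C}\, \delta_{\widetilde{N}}^{m}$ for the representative of class $m$, the two observations give $(\widetilde{P}(u))_{lm} = (\delta_{\widetilde{N}}^{l})^{\top} C \, P(u)\, a_m = \sum_{x \in [l]} x^{\top} P(u)\, a_m$, which is exactly the probability that $\Sigma$, launched from $a_m$ under input $u$, lands somewhere in the class $[l]$. For part (a), nonnegativity of the entries is immediate, and summing the $m$th column over $l$ collapses the partition: $\sum_{l} (\widetilde{P}(u))_{lm} = \big(\sum_{x \in \Delta_N} x^{\top}\big) P(u)\, a_m = \mathbf{1}^{\top} P(u)\, a_m = 1$, since $P(u)$ is column-stochastic. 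Hence $\widetilde{P}(u)$ is column-stochastic, and by the converse fact established just before the theorem a realizing system $\Sigma_{\mathcal{R}}$ with transition probability matrix $\widetilde{P}$ indeed exists.

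For part (b), I note that $Ca$ and $Ca'$ are the canonical vectors $\delta_{\widetilde{N}}^{m}$ and $\delta_{\widetilde{N}}^{l}$ indexing the classes of $a$ and $a'$, so the transition probability of $\Sigma_{\mathcal{R}}$ from $Ca$ to $Ca'$ under $u$ is precisely $(\widetilde{P}(u))_{lm} = \sum_{x \in [a']} x^{\top} P(u)\, a_m$. Since $a$ and $a_m$ both belong to class $m$, property (\ref{eq4}) lets me replace $a_m$ by $a$ without changing the value, yielding $\sum_{x \in [a']} x^{\top} P(u)\, a$, which is exactly the probability that $\Sigma$ moves from $a$ into $[a']$ under $u$. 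The computation itself is routine; the one place where genuine content enters is the appeal to (\ref{eq4}), which is precisely the hypothesis ensuring that the entries of $\widetilde{P}(u)$ are well defined (independent of the representative $a_m$) and that the class-to-class probabilities of $\Sigma$ agree with those of $\Sigma_{\mathcal{R}}$. I therefore expect the main subtlety to be conceptual rather than computational: correctly identifying $\widetilde{C}$ as a representative-selector and verifying that $(\delta_{\widetilde{N}}^{l})^{\top} C$ is the class indicator, after which everything reduces to partitioning a column sum.
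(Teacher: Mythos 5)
Your proposal is correct and follows essentially the same route as the paper: both identify the $(l,m)$-entry of $CP(u)\widetilde{C}$ as the probability of $\Sigma$ moving from the representative $\widetilde{C}\delta_{\widetilde{N}}^{m}$ into the $l$th class, invoke (\ref{eq4}) to replace that representative by an arbitrary member of its class (the paper's identity (\ref{eq4a})), and obtain (a) by collapsing the partition of $\Delta_N$ against the column-stochasticity of $P(u)$. Your matrix-level phrasing ($C\widetilde{C}=I_{\widetilde{N}}$, rows of $C$ as class indicators) is just a cleaner packaging of the paper's entry-wise computation, and your proof of (a) even avoids the appeal to (\ref{eq4}) that the paper's does, but the substance is the same.
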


\begin{proof}
We first claim that for all $u \in \Delta_M$ and $a, a' \in
\Delta_N$ we have
\begin{equation} \label{eq4a}
\sum_{x \in [a']} x^{\top} P(u) a = (q')^{\top} \widetilde{P}(u) q ,
\end{equation}
where $q = Ca$ and $q' = C a'$. To see this, suppose that $q =
\delta_{\widetilde{N}}^j$, $q' = \delta_{\widetilde{N}}^i$, and
$\widetilde{C} \delta_{\widetilde{N}}^j = \delta_N^s$. Then
\begin{align} \label{eq4b}
(q')^{\top} \widetilde{P}(u) q &= (\widetilde{P}(u))_{ij} =
\sum_{l=1}^N \bigg(\sum_{k=1}^N (C)_{ik}
(P(u))_{kl}\bigg)(\widetilde{C})_{lj} \notag \\
&= \sum_{k=1}^N (C)_{ik} (P(u))_{ks} .
\end{align}
The last equality follows since $(\widetilde{C})_{lj} = 1$ exactly
when $l = s$. Noting the equivalence
\begin{align*}
(C)_{ik} = 1 &\Longleftrightarrow C \delta_N^k =
\delta_{\widetilde{N}}^i = q' = C a' \Longleftrightarrow
(\delta_N^k, a') \in \mathcal{R} \\
&\Longleftrightarrow \delta_N^k \in [a'] ,
\end{align*}
we get the above (\ref{eq4b}) equal to
\begin{equation} \label{eq5}
\sum_{ \left\{ k \colon \delta_N^k \in [a'] \right\} } (P(u))_{ks} =
\sum_{\delta_N^k \in [a']} (\delta_N^k)^{\top} P(u) \delta_N^s .
\end{equation}
Since $\widetilde{C} \leq C^{\top}$ and $(\widetilde{C})_{sj} = 1$,
we have $(C)_{js} = 1$. Thus, $C \delta_N^s =
\delta_{\widetilde{N}}^j = q = Ca$ and, hence, $(\delta_N^s, a) \in
\mathcal{R}$. By (\ref{eq4}), the right-hand side of (\ref{eq5}) is
then equal to $\sum_{x \in [a']} x^{\top} P(u) a$, and the claim is
proved.

We can now prove (a) and (b). Let $u \in \Delta_M$ and $1 \leq j
\leq \widetilde{N}$ be fixed. It follows from (\ref{eq4a}) that
\begin{align} \label{eq6}
\sum_{i=1}^{\widetilde{N}} (\widetilde{P}(u))_{ij} &=
\sum_{i=1}^{\widetilde{N}} (\delta_{\widetilde{N}}^i)^{\top}
\widetilde{P}(u) \delta_{\widetilde{N}}^j \notag \\
&= \sum_{i=1}^{\widetilde{N}} \sum_{\big\{ x \colon Cx =
\delta_{\widetilde{N}}^i \big\} } x^{\top} P(u) \delta_N^r ,
\end{align}
where $1 \leq r \leq N$ is such that $C \delta_N^r =
\delta_{\widetilde{N}}^j$ (such an $r$ exists since $C \in
\mathcal{L}^{\widetilde{N} \times N}$ is of full row rank). Since
$\Delta_N$ is the disjoint union of the sets $\big\{x \colon Cx =
\delta_{\widetilde{N}}^i \big\}$, $i = 1, \ldots, \widetilde{N}$,
the above (\ref{eq6}) is equal to $\sum_{k=1}^N (\delta_N^k)^{\top}
P(u) \delta_N^r = \sum_{k=1}^N (P(u))_{kr} = 1$, where the final
equality follows from the column-stochasticity of $P(u)$. This shows
that $\widetilde{P}(u)$ is column-stochastic, proving (a).

In order to prove part (b), we note that the right-hand side of
(\ref{eq4a}) is exactly the transition probability of
$\Sigma_{\mathcal{R}}$ from $q = Ca$ to $q' = Ca'$ under input $u$.
On the other hand, the left-hand side of (\ref{eq4a}) is the
transition probability with which $\Sigma$ moves from $a$ to
equivalence class $[a']$ when control action $u$ is applied. The
assertion of part (b) then follows from (\ref{eq4a}).
\end{proof}

Since, by the above theorem, $\Sigma_{\mathcal{R}}$ generates the
transitions of $\mathcal{T}(\Sigma) / \mathcal{R}$ (recall that the
assignment $[x] \mapsto Cx$ is a bijection between $\Delta_N /
\mathcal{R}$ and $\Delta_{\widetilde{N}}$), it can be interpreted as
a quotient of the PBN $\Sigma$.

\begin{remark} \label{rmk1}
Note that for a given $u \in \Delta_M$, the matrix
$\widetilde{P}(u)$ introduced in Theorem~\ref{thm1} is a constant
for all $\widetilde{C} \in \mathcal{L}^{N \times \widetilde{N}}$
such that $\widetilde{C} \leq C^\top$. Indeed, it follows from
(\ref{eq4b}) and (\ref{eq5}) that the $(i,j)$-entry of
$\widetilde{P}(u)$ is equal to the probability of $\Sigma$ moving
from the state $\widetilde{C} \delta_{\widetilde{N}}^j \in \Delta_N$
to the equivalence class $\big\{x \in \Delta_N \colon Cx =
\delta_{\widetilde{N}}^i \big\}$ when input $u$ is applied. It is
easy to see that for any $\widetilde{C} \in \mathcal{L}^{N \times
\widetilde{N}}$ satisfying $\widetilde{C} \leq C^\top$,
$\widetilde{C} \delta_{\widetilde{N}}^j$ belongs to the equivalence
class $\big\{x \in \Delta_N \colon Cx = \delta_{\widetilde{N}}^j
\big\}$. Since all states in $\big\{x \colon Cx =
\delta_{\widetilde{N}}^j \big\}$ have the same probability of
transitioning into $\big\{x \colon Cx = \delta_{\widetilde{N}}^i
\big\}$ given input $u$ (cf. (\ref{eq4})), the $(i,j)$-entry of
$\widetilde{P}(u)$ is constant for all logical matrices
$\widetilde{C}$ such that $\widetilde{C} \leq C^\top$, from which we
conclude that $\widetilde{P}(u)$ is a constant matrix whenever
$\widetilde{C} \leq C^\top$.
\end{remark}

\begin{example} \label{eg1}
As a simple illustration of Theorem~\ref{thm1}, consider a PBN as in
(\ref{eq2}), with $N = 8$, $M = 2$, and the transition probability
matrix given by
\begin{align*}
P &= \big[ \delta_8^2 \;\; 0.5 \delta_8^1 + 0.5 \delta_8^3 \;\; 0.5
\delta_8^1 + 0.5 \delta_8^3 \;\; \delta_8^5 \;\; \delta_8^6 \;\;
\delta_8^7 \;\; \delta_8^8 \;\; \delta_8^5 \, | \\
& \quad \;\;\; \delta_8^1 \;\; \delta_8^1 \;\; \delta_8^1 \;\; 0.3
\delta_8^4 + 0.7 \delta_8^8 \;\; \delta_8^6 \;\; \delta_8^7 \;\; 0.5
\delta_8^6 +
0.5 \delta_8^8 \;\; \delta_8^7 \big] \\
&= \big[ P(\delta_2^1) \;\; P(\delta_2^2) \big] .
\end{align*}
The state transition diagram of the PBN is shown in Fig.~\ref{fig1}.
\begin{figure}
\centerline{\includegraphics[width= 7.8 cm]{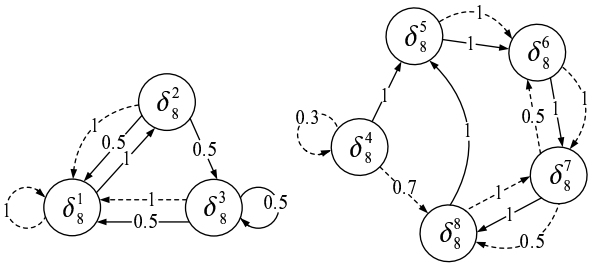}}
\caption{State transition diagram of the PBN in Example~\ref{eg1}. A
solid arrow represents the transition by the input $\delta_2^1$ and
a dashed arrow represents the transition by the input $\delta_2^2$.
The number associated with each arrow denotes the probability of the
state transition given the input.}  \label{fig1}
\end{figure}
Let $\mathcal{R}$ be the equivalence relation on $\Delta_8$ produced
by the partition $\{ \{\delta_8^1\}, \, \{\delta_8^2, \delta_8^3\},
\, \{\delta_8^4\}, \, \{\delta_8^5, \delta_8^6, \delta_8^7,
\delta_8^8\} \}$ (that is, the pair $(x,x') \in \mathcal{R}$ exactly
when $x$ and $x'$ belong to the same subset of the partition). It is
easily checked that (\ref{eq4}) is satisfied. The matrix
representing $\mathcal{R}$ is
\begin{equation*}
A_{\mathcal{R}} =
\begin{bmatrix}
1 & 0 & 0 & 0 \\
0 & J_2 & 0 & 0 \\
0 & 0 & 1 & 0 \\
0 & 0 & 0 & J_4
\end{bmatrix} ,
\end{equation*}
where $J_k$ denotes the all-one matrix of size $k \times k$.
Collapsing the identical rows of $A_{\mathcal{R}}$ yields a full row
rank matrix
\begin{equation*}
C = \big[ \delta_4^1 \;\; \delta_4^2 \;\; \delta_4^2 \;\; \delta_4^3
\;\; \delta_4^4 \;\; \delta_4^4 \;\; \delta_4^4 \;\; \delta_4^4
\big]
\end{equation*}
which fulfills (\ref{eq3}); and we take $\widetilde{C} = \big[
\delta_8^1 \;\; \delta_8^2 \;\; \delta_8^4 \;\; \delta_8^5 \big]$,
which satisfies $\widetilde{C} \leq C^{\top}$. A calculation then
yields
\begin{align*}
\widetilde{P}(\delta_2^1) &= C P(\delta_2^1) \widetilde{C} =
\big[\delta_4^2 \;\; 0.5 \delta_4^1 + 0.5 \delta_4^2 \;\; \delta_4^4
\;\; \delta_4^4 \big] , \\
\widetilde{P}(\delta_2^2) &= C P(\delta_2^2) \widetilde{C} =
\big[\delta_4^1 \;\; \delta_4^1 \;\; 0.3 \delta_4^3 + 0.7 \delta_4^4
\;\; \delta_4^4 \big] .
\end{align*}
The state transition diagram of $\Sigma_{\mathcal{R}}$ whose
transition probability matrix is given by $\widetilde{P} = \big[
\widetilde{P}(\delta_2^1) \;\; \widetilde{P}(\delta_2^2) \big]$ is
shown in Fig.~\ref{fig2}.
\begin{figure}
\centerline{\includegraphics[width= 7.5 cm]{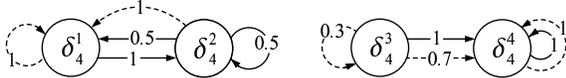}}
\caption{State transition diagram of $\Sigma_{\mathcal{R}}$ defined
in Example~\ref{eg1}.} \label{fig2}
\end{figure}
It is clear from the figure that $\Sigma_{\mathcal{R}}$ is indeed a
quotient of the original network which does not distinguish between
states related by $\mathcal{R}$.
\end{example}

Theorem~\ref{thm1} enables us to obtain a quotient Boolean system
once an equivalence relation satisfying (\ref{eq4}) is found. For
the remainder of this section, we will discuss the issue of
computing equivalence relations which allow the construction of
quotient Boolean systems. More precisely we consider the following
problem: given a PBN $\Sigma$ and an equivalence relation
$\mathcal{S}$ on $\Delta_N$, determine the maximal (with respect to
set inclusion) equivalence relation $\mathcal{R} \subseteq \Delta_N
\times \Delta_N$ such that $\mathcal{R} \subseteq \mathcal{S}$ and
condition (\ref{eq4}) holds. Here, the relation $\mathcal{S}$ may be
interpreted as a preliminary classification of the states of
$\Sigma$; and we focus on finding the maximal equivalence relation
since in many cases we want the size of a quotient system to be as
small as possible. The following theorem suggests a way of deriving
such an equivalence relation.

\begin{theorem} \label{thm2}
Let $\Sigma$ be a PBN described by (\ref{eq2}) and let $\mathcal{S}$
be an equivalence relation on $\Delta_N$. Define a sequence of
relations $\mathcal{R}_k$ by
\begin{equation*}
\mathcal{R}_1 = \mathcal{S} \;\; \text{and} \;\; \mathcal{R}_{k+1} =
\bigg( \bigcap_{u \in \Delta_M} \mathcal{S}_{u,k} \bigg) \cap
\mathcal{R}_k ,
\end{equation*}
where $\mathcal{S}_{u,k}$ is the relation on $\Delta_N$ defined by:
$(a, a') \in \mathcal{S}_{u,k}$ if and only if $\sum_{x \in [b]}
x^{\top} P(u) a = \sum_{x \in [b]} x^{\top} P(u) a'$ for all $[b]
\in \Delta_N / \mathcal{R}_k$, with the matrix $P(u)$ given by
(\ref{eq2a}). Then:
\begin{enumerate} [(a)]
\item The sequence of relations $\mathcal{R}_1, \mathcal{R}_2,
\ldots, \mathcal{R}_k , \ldots$ satisfies $\mathcal{R}_1 \supseteq
\mathcal{R}_2 \supseteq \cdots \supseteq \mathcal{R}_k \supseteq
\cdots$.

\item There is an integer $k^\ast$ such that $\mathcal{R}_{k^\ast +
1} = \mathcal{R}_{k^\ast}$.

\item $\mathcal{R}_{k^\ast}$ is nonempty and is the maximal
equivalence relation on $\Delta_N$ such that $\mathcal{R}_{k^\ast}
\subseteq \mathcal{S}$ and property (\ref{eq4}) holds.
\end{enumerate}
\end{theorem}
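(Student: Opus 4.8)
The plan is to first verify that each $\mathcal{R}_k$ is a genuine equivalence relation, so that the quotient sets $\Delta_N / \mathcal{R}_k$ entering the definition of $\mathcal{S}_{u,k}$ make sense. I would proceed by induction: $\mathcal{R}_1 = \mathcal{S}$ is an equivalence relation by hypothesis, and assuming $\mathcal{R}_k$ is one, each $\mathcal{S}_{u,k}$ is reflexive, symmetric, and transitive---these follow at once from the reflexivity, symmetry, and transitivity of equality among the sums $\sum_{x \in [b]} x^{\top} P(u)(\cdot)$---so that $\mathcal{R}_{k+1}$, being an intersection of equivalence relations, is again one. Part (a) is then immediate, since $\mathcal{R}_{k+1}$ is an intersection having $\mathcal{R}_k$ as one of its factors, whence $\mathcal{R}_{k+1} \subseteq \mathcal{R}_k$. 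For part (b) I would observe that the $\mathcal{R}_k$ form a decreasing chain of subsets of the finite set $\Delta_N \times \Delta_N$, each containing the diagonal, so the chain must eventually stabilize; letting $k^{\ast}$ be the least index with $\mathcal{R}_{k^{\ast}+1} = \mathcal{R}_{k^{\ast}}$ settles (b), and I would also note that once two consecutive terms coincide the sequence stays constant, because $\mathcal{S}_{u,k}$ depends on $k$ only through $\mathcal{R}_k$.

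For part (c), nonemptiness is clear because every $\mathcal{R}_k$ contains the diagonal $\{(a,a) \colon a \in \Delta_N\}$. The crux of the argument is to recognize that the stabilization equation $\mathcal{R}_{k^{\ast}+1} = \mathcal{R}_{k^{\ast}}$ is nothing but a restatement of property (\ref{eq4}). Unwinding the recursion, this equation forces $\mathcal{R}_{k^{\ast}} \subseteq \bigcap_{u \in \Delta_M} \mathcal{S}_{u,k^{\ast}}$, which says exactly that for every $(a,a') \in \mathcal{R}_{k^{\ast}}$, every $u \in \Delta_M$, and every class $[b] \in \Delta_N / \mathcal{R}_{k^{\ast}}$ one has $\sum_{x \in [b]} x^{\top} P(u) a = \sum_{x \in [b]} x^{\top} P(u) a'$; this is precisely (\ref{eq4}) taken with $\mathcal{R} = \mathcal{R}_{k^{\ast}}$. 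Together with $\mathcal{R}_{k^{\ast}} \subseteq \mathcal{R}_1 = \mathcal{S}$ from part (a), this shows $\mathcal{R}_{k^{\ast}}$ is an equivalence relation contained in $\mathcal{S}$ and satisfying (\ref{eq4}).

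It remains to establish maximality, which I expect to be the main obstacle. Let $\mathcal{R}$ be an arbitrary equivalence relation with $\mathcal{R} \subseteq \mathcal{S}$ that satisfies (\ref{eq4}); I would show $\mathcal{R} \subseteq \mathcal{R}_k$ for every $k$ by induction, so that in particular $\mathcal{R} \subseteq \mathcal{R}_{k^{\ast}}$. The base case is $\mathcal{R} \subseteq \mathcal{S} = \mathcal{R}_1$. For the inductive step, assuming $\mathcal{R} \subseteq \mathcal{R}_k$, it suffices to check $\mathcal{R} \subseteq \mathcal{S}_{u,k}$ for each $u$, as then $\mathcal{R} \subseteq (\bigcap_{u} \mathcal{S}_{u,k}) \cap \mathcal{R}_k = \mathcal{R}_{k+1}$. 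The decisive observation is that $\mathcal{R} \subseteq \mathcal{R}_k$ makes the $\mathcal{R}$-partition a refinement of the $\mathcal{R}_k$-partition, so each class $[b] \in \Delta_N / \mathcal{R}_k$ is a disjoint union of $\mathcal{R}$-classes. Applying (\ref{eq4}) for $\mathcal{R}$ to each such sub-class and summing gives $\sum_{x \in [b]} x^{\top} P(u) a = \sum_{x \in [b]} x^{\top} P(u) a'$ for all $(a,a') \in \mathcal{R}$, that is, $(a,a') \in \mathcal{S}_{u,k}$. The subtlety to watch throughout is that the classes $[b]$ in the definition of $\mathcal{S}_{u,k}$ are formed with respect to $\mathcal{R}_k$ rather than $\mathcal{R}$, and it is exactly this mismatch that makes the refinement-and-sum step the operative one.
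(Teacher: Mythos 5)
Your proof is correct and takes essentially the same approach as the paper's: parts (a) and (b) follow from the intersection structure and finiteness, property (\ref{eq4}) for $\mathcal{R}_{k^\ast}$ is read off from the stabilization equation $\mathcal{R}_{k^\ast} \subseteq \bigcap_{u} \mathcal{S}_{u,k^\ast}$, and maximality is proved by induction using exactly the paper's refinement-and-sum argument (each $\mathcal{R}_k$-class is a disjoint union of $\mathcal{R}$-classes). The only difference is cosmetic: you spell out why each $\mathcal{S}_{u,k}$ is itself an equivalence relation, a point the paper compresses into ``a simple inductive argument.''
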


\begin{proof}
We first note that, since $\mathcal{R}_1 = \mathcal{S}$ is an
equivalence relation, a simple inductive argument shows that for
each $k \geq 1$, $\mathcal{R}_k$ is also an equivalence relation and
the quotient $\Delta_N / \mathcal{R}_k$ in the definition of
$\mathcal{S}_{u,k}$ makes sense.

Part (a) is trivial. Part (b) follows from (a) and the finiteness of
each $\mathcal{R}_k$. We proceed to the proof of (c). The relation
$\mathcal{R}_{k^\ast}$ is clearly nonempty (since it contains the
identity relation on $\Delta_N$) and is a subset of $\mathcal{S}$.
To show that (\ref{eq4}) holds true, suppose $(a, a') \in
\mathcal{R}_{k^\ast}$, $[b] \in \Delta_N / \mathcal{R}_{k^\ast}$ and
$u \in \Delta_M$. Since $\mathcal{R}_{k^\ast} = \mathcal{R}_{k^\ast
+ 1} \subseteq \mathcal{S}_{u,k^\ast}$, it follows, from the
definition of $\mathcal{S}_{u,k^\ast}$, that $\sum_{x \in [b]}
x^\top P(u) a = \sum_{x \in [b]} x^\top P(u) a'$, showing that
(\ref{eq4}) holds for $\mathcal{R}_{k^\ast}$.

To prove the maximality of $\mathcal{R}_{k^\ast}$, let $\mathcal{R}
\subseteq \Delta_N \times \Delta_N$ be another equivalence relation
which is contained in $\mathcal{S}$ and satisfies (\ref{eq4}). We
show by induction that $\mathcal{R} \subseteq \mathcal{R}_k$ for all
$k$. This, in particular, means that $\mathcal{R} \subseteq
\mathcal{R}_{k^\ast}$, thus proving the maximality of
$\mathcal{R}_{k^\ast}$. The case $k = 1$ is trivial, so we take $k >
1$ and assume that $\mathcal{R} \subseteq \mathcal{R}_{k-1}$. Let
$(a,a') \in \mathcal{R}$ and fix $u \in \Delta_M$. Then we have
$\sum_{x \in E} x^\top P(u) a = \sum_{x \in E} x^\top P(u) a'$ for
any equivalence class $E$ of $\mathcal{R}$. Since $\mathcal{R}
\subseteq \mathcal{R}_{k-1}$, each equivalence class in
$\mathcal{R}_{k-1}$ is a disjoint union of equivalence classes of
$\mathcal{R}$. It follows that $\sum_{x \in [b]} x^\top P(u) a =
\sum_{x \in [b]} x^\top P(u) a'$ for all $[b] \in \Delta_N /
\mathcal{R}_{k-1}$, and consequently $(a,a') \in \mathcal{S}_{u,
k-1}$ by the definition of $\mathcal{S}_{u, k-1}$. Since $u \in
\Delta_M$ is arbitrary, we have $(a, a') \in \bigcap_{u \in
\Delta_M} \mathcal{S}_{u, k-1}$, and noting that $(a,a') \in
\mathcal{R} \subseteq \mathcal{R}_{k-1}$ we conclude $(a,a') \in
\mathcal{R}_k$. This shows $\mathcal{R} \subseteq \mathcal{R}_k$,
and the theorem is proved.
\end{proof}

Recall that a relation $\mathcal{R} \subseteq \Delta_N \times
\Delta_N$ can be represented by a $(0,1)$-matrix of size $N \times
N$, whose $(i,j)$-entry is $1$ if and only if $(\delta_N^i,
\delta_N^j) \in \mathcal{R}$. For the sake of applications, it is
convenient to reformulate the above theorem in terms of
$(0,1)$-matrices.

\begin{corollary} \label{cor1}
Suppose that $\mathcal{S}$ is an equivalence relation on $\Delta_N$
represented by a matrix $A_{\mathcal{S}}$. For each $u \in \Delta_M$
let $P(u)$ be as in (\ref{eq2a}). Define a sequence of
$(0,1)$-matrices by
\begin{equation*}
A_1 = A_{\mathcal{S}} \;\; \text{and} \;\; A_{k+1} = A_k \wedge
B_{k,1} \wedge \cdots \wedge B_{k,M} ,
\end{equation*}
where $B_{k,l}$ ($l = 1,2, \ldots, M$) are $N \times N$
$(0,1)$-matrices whose $(i,j)$-entry is $1$ if and only if the $i$th
and $j$th columns of $A_k P(\delta_M^l)$ are identical. Then there
is an integer $k^\ast$ such that $A_{k^\ast + 1} = A_{k^\ast}$, and
$A_{k^\ast}$ is the matrix representing the maximal equivalence
relation on $\Delta_N$ that is contained in $\mathcal{S}$ and
satisfies property (\ref{eq4}).
\end{corollary}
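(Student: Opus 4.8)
The plan is to recognize that Corollary~\ref{cor1} is nothing more than the matrix-level translation of Theorem~\ref{thm2}, so that its conclusions follow by reinterpreting the relations $\mathcal{R}_k$ and $\mathcal{S}_{u,k}$ through their representing $(0,1)$-matrices. First I would record the two elementary facts that set up the dictionary. The intersection of relations corresponds to the meet of their matrices, so that $A_{\mathcal{R} \cap \mathcal{R}'} = A_{\mathcal{R}} \wedge A_{\mathcal{R}'}$. And, writing $A_k$ for the matrix representing $\mathcal{R}_k$, the $(r,i)$-entry of the product $A_k P(\delta_M^l)$ equals $\sum_{x \in [\delta_N^r]} x^\top P(\delta_M^l) \delta_N^i$, namely the probability with which $\Sigma$ moves from $\delta_N^i$ into the $\mathcal{R}_k$-class $[\delta_N^r]$ under input $\delta_M^l$. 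This second identity is immediate from $(A_k)_{rs} = 1 \Leftrightarrow \delta_N^s \in [\delta_N^r]$ together with the observation that $x^\top P(u) \delta_N^i$ is the $(s,i)$-entry of $P(u)$ when $x = \delta_N^s$.

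The key step, and the one I expect to carry the real content, is to identify the matrix $B_{k,l}$ with the relation $\mathcal{S}_{\delta_M^l, k}$. Using the entry formula just described, the $i$th and $j$th columns of $A_k P(\delta_M^l)$ coincide if and only if $\sum_{x \in [\delta_N^r]} x^\top P(\delta_M^l) \delta_N^i = \sum_{x \in [\delta_N^r]} x^\top P(\delta_M^l) \delta_N^j$ holds for every index $r \in \{1, \ldots, N\}$. As $r$ ranges over all indices the class $[\delta_N^r]$ ranges over every equivalence class of $\mathcal{R}_k$, each class being enumerated once for each of its members; since the defining condition of $\mathcal{S}_{\delta_M^l, k}$ quantifies over classes $[b]$ rather than over representatives, this multiplicity is harmless, and the column-identity condition is exactly the defining condition of $\mathcal{S}_{\delta_M^l, k}$ with $a = \delta_N^i$ and $a' = \delta_N^j$. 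Hence $(B_{k,l})_{ij} = 1$ precisely when $(\delta_N^i, \delta_N^j) \in \mathcal{S}_{\delta_M^l, k}$; that is, $B_{k,l}$ represents $\mathcal{S}_{\delta_M^l, k}$.

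With these correspondences in hand the remainder is bookkeeping. Combining the meet-versus-intersection identity with the identification of each $B_{k,l}$ shows by induction on $k$ that $A_k = A_{\mathcal{R}_k}$ for all $k$: indeed $A_1 = A_{\mathcal{S}} = A_{\mathcal{R}_1}$, and $A_{k+1} = A_k \wedge B_{k,1} \wedge \cdots \wedge B_{k,M}$ represents $\mathcal{R}_k \cap \bigcap_{u \in \Delta_M} \mathcal{S}_{u,k} = \mathcal{R}_{k+1}$. Since Theorem~\ref{thm2} already guarantees each $\mathcal{R}_k$ to be an equivalence relation, $A_k$ automatically represents one. I would then transfer the three conclusions of Theorem~\ref{thm2} verbatim: the stabilization index $k^\ast$ with $\mathcal{R}_{k^\ast + 1} = \mathcal{R}_{k^\ast}$ yields $A_{k^\ast + 1} = A_{k^\ast}$, and the maximality of $\mathcal{R}_{k^\ast}$ among equivalence relations contained in $\mathcal{S}$ and satisfying (\ref{eq4}) is exactly the asserted property of $A_{k^\ast}$. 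The only place meriting a line of care is the column-comparison step of the second paragraph, where one must confirm that the partition-based sums are recovered correctly despite classes appearing with multiplicity in the enumeration over $r$.
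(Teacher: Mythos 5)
Your proposal is correct and follows essentially the same route as the paper's own proof: computing the entries of $A_k P(\delta_M^l)$ to identify $B_{k,l}$ as the matrix representing $\mathcal{S}_{\delta_M^l,k}$, invoking the meet-equals-intersection fact, and concluding by induction that $A_k$ represents $\mathcal{R}_k$ so that Theorem~\ref{thm2} applies. Your closing remark about the harmless multiplicity in the enumeration over $r$ is precisely the point the paper handles when passing from ``for all $1 \leq r \leq N$'' to ``for each $[b] \in \Delta_N / \mathcal{R}_{k-1}$.''
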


\begin{proof}
We show that, for each $k \geq 1$, the matrix $A_k$ represents the
equivalence relation $\mathcal{R}_k$ defined in Theorem~\ref{thm2};
the result then follows by Theorem~\ref{thm2}. We proceed by
induction on $k$, with the case $k=1$ being trivial. Suppose that
$\mathcal{R}_{k-1}$ has the matrix representation $A_{k-1}$. For $u
\in \Delta_M$ and $1 \leq r, s \leq N$, the $(r,s)$-entry of the
matrix $A_{k-1} P(u)$ is
\begin{align*}
\sum_{r' = 1}^N (A_{k-1})_{r r'} (P(u))_{r' s} &= \sum_{r' = 1}^N
(A_{k-1})_{r r'} (\delta_N^{r'})^\top P(u) \delta_N^s \\
&= \sum_{ \{r' \colon (A_{k-1})_{r r'} = 1\}} (\delta_N^{r'})^\top
P(u) \delta_N^s ,
\end{align*}
and since $\mathcal{R}_{k-1}$ is represented by $A_{k-1}$, this
equals
\begin{equation*}
\sum_{ \{r' : (\delta_N^r, \delta_N^{r'}) \in \mathcal{R}_{k-1} \}}
\negthickspace (\delta_N^{r'})^\top P(u) \delta_N^s = \negthickspace
\sum_{ \{x : (\delta_N^r, x) \in \mathcal{R}_{k-1} \}}
\negthickspace x^\top P(u) \delta_N^s .
\end{equation*}
Consequently, the $i$th and $j$th columns of $A_{k-1} P(u)$ are the
same exactly when
\begin{equation*}
\sum_{ \{x \colon (\delta_N^r, x) \in \mathcal{R}_{k-1} \} } x^\top
P(u) \delta_N^i = \sum_{ \{x \colon (\delta_N^r, x) \in
\mathcal{R}_{k-1} \} } x^\top P(u) \delta_N^j
\end{equation*}
for all $1 \leq r \leq N$, and the latter is clearly equivalent to
saying that $\sum_{x \in [b]} x^\top P(u) \delta_N^i = \sum_{x \in
[b]} x^\top P(u) \delta_N^j$ for each $[b] \in \Delta_N /
\mathcal{R}_{k-1}$. Hence, if $\mathcal{S}_{u, k-1}$ is the relation
described in Theorem~\ref{thm2} and if $u = \delta_M^l$, then
\begin{equation*}
(\delta_N^i, \delta_N^j) \in \mathcal{S}_{u, k-1}
\Longleftrightarrow \text{the $(i,j)$-entry of $B_{k-1,l}$ is $1$} ,
\end{equation*}
and thus $B_{k-1, l}$ is the matrix representing $\mathcal{S}_{u,
k-1}$. Observe that the matrix representation of the intersection of
relations is equal to the meet of the matrices representing these
relations (see, e.g., \cite[Section 9.3]{rosenbook}). We conclude
that the relation $\mathcal{R}_k$ is represented by $A_k$, and this
completes the proof.
\end{proof}

\begin{example} \label{eg2}
Consider again the PBN in Example~\ref{eg1}. If we let $\mathcal{S}$
be the equivalence relation determined by the partition $\mathcal{P}
= \{ \{\delta_8^1\}, \, \{\delta_8^2, \delta_8^3, \delta_8^4\}, \,
\{\delta_8^5, \delta_8^6, \delta_8^7, \delta_8^8\} \}$, then
\begin{equation*}
A_1 =
\begin{bmatrix}
1 & 0 & 0 \\
0 & J_3 & 0 \\
0 & 0 & J_4
\end{bmatrix} ,
\end{equation*}
and a direct computation from Corollary~\ref{cor1} yields
\begin{equation*}
A_2 = A_3 =
\begin{bmatrix}
1 & 0 & 0 & 0 \\
0 & J_2 & 0 & 0 \\
0 & 0 & 1 & 0 \\
0 & 0 & 0 & J_4
\end{bmatrix} ,
\end{equation*}
which is precisely the matrix representing the relation given in
Example~\ref{eg1}. Hence the relation $\mathcal{R}$ presented in
Example~\ref{eg1} is the maximal equivalence relation contained in
$\mathcal{S}$ which satisfies condition (\ref{eq4}). We mention that
here it is easy to check directly that the obtained $\mathcal{R}$ is
indeed maximal. Specifically, note that any equivalence relation
contained in $\mathcal{S}$ corresponds to a refinement of the
partition $\mathcal{P} = \{ \{\delta_8^1\}, \, \{\delta_8^2,
\delta_8^3, \delta_8^4\}, \, \{\delta_8^5, \delta_8^6, \delta_8^7,
\delta_8^8\} \}$. Since, for $u \in \Delta_2$, $(\delta_8^1)^\top
P(u) \delta_8^2 = (\delta_8^1)^\top P(u) \delta_8^3 \neq 0$ while
$(\delta_8^1)^\top P(u) \delta_8^4 = 0$, condition (\ref{eq4}) does
not hold for any equivalence relation corresponding to a refinement
of $\mathcal{P}$ in which $\delta_8^2$ and $\delta_8^4$, or
$\delta_8^3$ and $\delta_8^4$, belong to the same block. On the
other hand, we observed in Example~\ref{eg1} that the relation
$\mathcal{R}$ produced by the partition $\{ \{\delta_8^1\}, \,
\{\delta_8^2, \delta_8^3\}, \, \{\delta_8^4\}, \, \{\delta_8^5,
\delta_8^6, \delta_8^7, \delta_8^8\} \}$ fulfills (\ref{eq4}); thus
it is the maximal equivalence relation which is contained in
$\mathcal{S}$ and satisfies (\ref{eq4}).
\end{example}

To conclude, we would like to point out that the proposed method for
generating a quotient of a PBN is a natural extension of the
approach presented in \cite{r.li2020} for constructing a quotient of
a deterministic BN. Recall that a deterministic BN described by
\begin{equation*}
\Sigma ' \colon \;\, x(t+1) = F \ltimes u(t) \ltimes x(t) , \; x \in
\Delta_N , \; u \in \Delta_M , \; F \in \mathcal{L}^{N \times NM}
\end{equation*}
can be seen as a special case of (\ref{eq2}), with $\theta(t)$
having a constant value with probability one for all $t \geq 0$. So
the results of this section apply at once. For $u \in \Delta_M$, let
$\widetilde{F}(u)$ be defined as $\widetilde{P}(u)$ is in
Theorem~\ref{thm1} with $P(u)$ in place of $F(u) \coloneqq F \ltimes
u$. We note that $\widetilde{F}(u)$ has all nonnegative integer
entries, and since it is column-stochastic by Theorem~\ref{thm1}(a),
every column contains exactly one nonzero entry and the nonzero
entry equals $1$, i.e., $\widetilde{F}(u)$ is a logical matrix.
Also, recall that the $(i,j)$-entry of $\widetilde{P}(u)$ defined in
Theorem~\ref{thm1} is equal to the probability with which the
original network reaches the equivalence class $\big\{x \in \Delta_N
\colon Cx = \delta_{\widetilde{N}}^i \big\}$ from an arbitrary but
fixed state in $\big\{x \colon Cx = \delta_{\widetilde{N}}^j \big\}$
when $u$ is applied (cf. Remark~\ref{rmk1}). Translated to the
deterministic setting, this means that $(\widetilde{F}(u))_{ij} = 1$
if and only if there is a one-step transition of $\Sigma'$ from a
state in $\big\{x \colon Cx = \delta_{\widetilde{N}}^j \big\}$ to a
state in $\big\{x \colon Cx = \delta_{\widetilde{N}}^i \big\}$ under
input $u$. The quotient system
\begin{equation*}
x_{\mathcal{R}}(t+1) = \widetilde{F} \ltimes u(t) \ltimes
x_{\mathcal{R}}(t)
\end{equation*}
given by Theorem~\ref{thm1}, where $\widetilde{F} =
\big[\widetilde{F}(\delta_M^1) \; \cdots \;
\widetilde{F}(\delta_M^M) \big]$, then coincides precisely with the
one presented in \cite[Theorem 1]{r.li2020}, in which a state
$\delta_{\widetilde{N}}^j$ can make a transition to another state
$\delta_{\widetilde{N}}^i$ by applying an input exactly when that
input drives $\Sigma'$ from some state in $\big\{x \colon Cx =
\delta_{\widetilde{N}}^j \big\}$ to some state in $\big\{x \colon Cx
= \delta_{\widetilde{N}}^i \big\}$.

\section{Control Design Via Quotients}

\label{sec4}

This section illustrates the application of quotient systems for
control design. We consider two typical control problems in PBNs and
show how the problems can be solved through the use of a quotient
Boolean system.

\subsection{Stabilization}

\label{sec4.1}

Consider a PBN $\Sigma$ as in (\ref{eq2}) and let $P(u)$ be as in
(\ref{eq2a}), which gives the (one-step) transition probabilities of
$\Sigma$ under input $u \in \Delta_M$. A (time-invariant) feedback
controller is given by a map $\mathcal{U} \colon \Delta_N
\rightarrow \Delta_M$ so that if the present state is $x \in
\Delta_N$, then the controller selects the control input
$\mathcal{U}(x) \in \Delta_M$, resulting in the matrix
$P(\mathcal{U}(x))$ that determines the one-step transition
probabilities. Observe that when the present state is, say,
$\delta_N^i$, only the transition probabilities of leaving
$\delta_N^i$ are relevant and are given by the $i$th column of the
matrix $P(\mathcal{U}(\delta_N^i))$. We use $P_{\mathcal{U}}$ to
denote the matrix obtained by stacking such columns, i.e., the $i$th
column of $P_{\mathcal{U}}$ is the $i$th column of
$P(\mathcal{U}(\delta_N^i))$. It is easy to see that the evolution
of $\Sigma$ under the control of the state feedback controller
$\mathcal{U} \colon \Delta_N \rightarrow \Delta_M$ is governed by
the matrix $P_{\mathcal{U}}$, i.e., the transition probability from
$a \in \Delta_N$ to $b \in \Delta_N$ after $k$ steps is given by
$b^\top P_{\mathcal{U}}^k a$. Let $\mathcal{M} \subseteq \Delta_N$
be a target set of states. The Boolean system $\Sigma$ is stabilized
to $\mathcal{M}$ with probability one by $\mathcal{U} \colon
\Delta_N \rightarrow \Delta_M$, if for every initial state $x_0 \in
\Delta_N$, there exists an integer $\tau$ such that $k \geq \tau$
implies $\sum_{x \in \mathcal{M}} x^\top P_{\mathcal{U}}^k x_0 = 1$
(see, e.g., \cite{r.li2014b,zhou2020}). The following result shows
that we can easily derive a stabilizing controller for $\Sigma$ on
the basis of a stabilizing controller for its quotient system.

\begin{proposition} \label{prop1}
Consider a PBN $\Sigma$ as given in (\ref{eq2}). Let $\mathcal{M}
\subseteq \Delta_N$ and let $\mathcal{S}$ be the equivalence
relation on $\Delta_N$ determined by the partition $\left\{
\mathcal{M}, \Delta_N - \mathcal{M} \right\}$. Suppose that
$\mathcal{R}$ is an equivalence relation on $\Delta_N$ induced by a
full row rank matrix $C \in \mathcal{L}^{\widetilde{N} \times N}$,
$\mathcal{R} \subseteq \mathcal{S}$, and (\ref{eq4}) holds. Suppose
$\Sigma_{\mathcal{R}}$ is defined as in Theorem~\ref{thm1} and let
$\mathcal{M}_{\mathcal{R}} = \{ Cx \colon x \in \mathcal{M} \}$.
Then:
\begin{enumerate} [(a)]
\item There exists a control law $\mathcal{U} \colon \Delta_N
\rightarrow \Delta_M$ that stabilizes $\Sigma$ to $\mathcal{M}$ with
probability one if and only if there exists a control law
$\mathcal{U}_{\mathcal{R}} \colon \Delta_{\widetilde{N}} \rightarrow
\Delta_M$ that stabilizes $\Sigma_{\mathcal{R}}$ to
$\mathcal{M}_{\mathcal{R}}$ with probability one.

\item If the controller $x_\mathcal{R} \mapsto
\mathcal{U}_{\mathcal{R}}(x_{\mathcal{R}})$ stabilizes
$\Sigma_{\mathcal{R}}$ to $\mathcal{M}_{\mathcal{R}}$ with
probability one, then the controller given by $x \mapsto
\mathcal{U}(x) = \mathcal{U}_{\mathcal{R}} (Cx)$ stabilizes $\Sigma$
to $\mathcal{M}$ with probability one.
\end{enumerate}
\end{proposition}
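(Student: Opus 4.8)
The plan is to reduce everything to a single \emph{intertwining relation} between the controlled transition matrices of $\Sigma$ and $\Sigma_{\mathcal{R}}$. For a feedback law $\mathcal{U}_{\mathcal{R}}\colon \Delta_{\widetilde{N}} \to \Delta_M$ on the quotient, write $\widetilde{P}_{\mathcal{U}_{\mathcal{R}}}$ for the associated controlled matrix of $\Sigma_{\mathcal{R}}$ (its $j$th column being the $j$th column of $\widetilde{P}(\mathcal{U}_{\mathcal{R}}(\delta_{\widetilde{N}}^j))$), defined exactly as $P_{\mathcal{U}}$ is for $\Sigma$. First I would take the lifted law $\mathcal{U} = \mathcal{U}_{\mathcal{R}} \circ C$, i.e.\ $\mathcal{U}(x) = \mathcal{U}_{\mathcal{R}}(Cx)$, and prove
\[
C\,P_{\mathcal{U}} = \widetilde{P}_{\mathcal{U}_{\mathcal{R}}}\,C .
\]
Checking this column by column, I fix $\delta_N^i$, put $\delta_{\widetilde{N}}^j = C\delta_N^i$ and $u = \mathcal{U}(\delta_N^i) = \mathcal{U}_{\mathcal{R}}(\delta_{\widetilde{N}}^j)$; the two sides become $C P(u) \delta_N^i$ and $C P(u) \widetilde{C} \delta_{\widetilde{N}}^j$. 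Since $\widetilde{C} \le C^\top$, the state $\widetilde{C} \delta_{\widetilde{N}}^j$ lies in the same $\mathcal{R}$-class as $\delta_N^i$; reading the $m$th entry of $C P(u)\,\cdot$ as the transition probability into the class $\{x \colon Cx = \delta_{\widetilde{N}}^m\}$, property~(\ref{eq4}) forces the two columns to coincide. Iterating gives $C P_{\mathcal{U}}^k = \widetilde{P}_{\mathcal{U}_{\mathcal{R}}}^k\,C$ for all $k$.

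The second ingredient is the row-vector identity $\sum_{x \in \mathcal{M}} x^\top = \big( \sum_{y \in \mathcal{M}_{\mathcal{R}}} y^\top \big) C$. This rests on $\mathcal{R} \subseteq \mathcal{S}$: because the classes of $\mathcal{R}$ refine the partition $\{\mathcal{M}, \Delta_N - \mathcal{M}\}$, the set $\mathcal{M}$ is a union of $\mathcal{R}$-classes, whence $Cx \in \mathcal{M}_{\mathcal{R}}$ if and only if $x \in \mathcal{M}$. Combining the two facts yields, for every $x_0 \in \Delta_N$ and every $k$,
\[
\sum_{x \in \mathcal{M}} x^\top P_{\mathcal{U}}^k x_0 = \sum_{y \in \mathcal{M}_{\mathcal{R}}} y^\top \widetilde{P}_{\mathcal{U}_{\mathcal{R}}}^k\,(Cx_0) ,
\]
that is, the probability that $\Sigma$ sits in $\mathcal{M}$ at time $k$ (from $x_0$, under the lifted law) equals the probability that $\Sigma_{\mathcal{R}}$ sits in $\mathcal{M}_{\mathcal{R}}$ at time $k$ (from $Cx_0$, under $\mathcal{U}_{\mathcal{R}}$). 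Part~(b) is immediate from this identity, and since $x \mapsto Cx$ is onto $\Delta_{\widetilde{N}}$ (so every quotient initial state is realised as some $Cx_0$), the ``if'' direction of part~(a) follows by taking $\mathcal{U} = \mathcal{U}_{\mathcal{R}} \circ C$.

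The main obstacle is the ``only if'' direction of~(a): a law $\mathcal{U}$ stabilizing $\Sigma$ need not be constant on $\mathcal{R}$-classes, so it does not descend to the quotient and the identity above does not apply to it directly. To get around this I would first recast stabilizability as a purely qualitative (reachability) property, namely that $\Sigma$ is stabilizable to $\mathcal{M}$ with probability one if and only if some controller drives every state, almost surely, into the largest control-invariant subset $I^\ast \subseteq \mathcal{M}$. The key point is then that the fixed-point computations defining $I^\ast$ and its almost-sure attractor are \emph{class-respecting}: invoking~(\ref{eq4}), whether a state $x$ can be kept inside a given union of classes, or pushed toward $I^\ast$, depends only on $[x] = Cx$, so every set produced is a union of $\mathcal{R}$-classes and at each state an adequate control can be selected depending only on $Cx$. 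This manufactures a class-respecting stabilizing law $\mathcal{U} = \mathcal{U}_{\mathcal{R}} \circ C$ for $\Sigma$ whenever $\Sigma$ is stabilizable; feeding this $\mathcal{U}$ back through the identity of the previous paragraph then shows that the corresponding $\mathcal{U}_{\mathcal{R}}$ stabilizes $\Sigma_{\mathcal{R}}$ to $\mathcal{M}_{\mathcal{R}}$, completing~(a). I expect verifying the class-invariance of these reachability fixed points, and hence the existence of a class-respecting optimal controller, to be the technical heart of the argument.
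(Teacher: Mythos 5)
Your proposal is correct, and its architecture is essentially that of the paper's own proof. The two key ingredients match: your identity $\sum_{x \in \mathcal{M}} x^\top P_{\mathcal{U}}^k x_0 = \sum_{y \in \mathcal{M}_{\mathcal{R}}} y^\top \widetilde{P}_{\mathcal{U}_{\mathcal{R}}}^k C x_0$ is exactly the paper's (\ref{eq7}) combined with the decomposition $\mathcal{M} = \bigcup_{z \in \mathcal{M}_{\mathcal{R}}} I(z)$, and your ``only if'' argument --- recast stabilizability as a reachability fixed point and show the iterates are unions of $\mathcal{R}$-classes with class-respecting control choices --- is precisely what the paper does via Lemma~\ref{lem1a} together with the correspondences (\ref{eq21}) and (\ref{eq20}), i.e.\ $x \in \mathcal{M}_i \Leftrightarrow Cx \in \widetilde{\mathcal{M}}_i$ and $x \in \mathcal{Z}_j \Leftrightarrow Cx \in \widetilde{\mathcal{Z}}_j$. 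The one genuine difference is packaging: you obtain the identity from the one-step intertwining relation $C P_{\mathcal{U}} = \widetilde{P}_{\mathcal{U}_{\mathcal{R}}} C$ and then get all powers $k$ by trivial matrix iteration, whereas the paper proves (\ref{eq7}) entrywise by induction on $k$, inserting $\sum_{b \in \Delta_N} b b^\top$ and regrouping sums class by class; your matrix formulation is cleaner and buys a shorter proof of part (b). Two caveats. First, the qualitative characterization of probability-one stabilizability that you invoke (``stabilizable iff every state can be driven almost surely into the largest control-invariant subset of $\mathcal{M}$'') is itself a nontrivial result --- it is the paper's Lemma~\ref{lem1a}, proved in the Appendix --- so your argument is complete only modulo that lemma, which you correctly identify as the technical heart. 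Second, surjectivity of $x \mapsto Cx$ is not what the ``if'' direction of (a) needs (part (b) alone suffices there); it is needed in the ``only if'' direction, to guarantee that every initial state of $\Sigma_{\mathcal{R}}$ is of the form $Cx_0$ when you feed the class-respecting law back through the identity.
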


For the proof of Proposition~\ref{prop1} we need the following lemma
adapted from \cite{r.li2016}. To make the paper self-contained, the
proof of this lemma is given in the Appendix.

\begin{lemma} \label{lem1a}
Consider a PBN as in (\ref{eq2}). Let $\mathcal{M} \subseteq
\Delta_N$, and let $\mathcal{M}^\ast$ be the last term of the
sequence
\begin{align*}
\mathcal{M}_0 &= \mathcal{M} , \\
\mathcal{M}_i &= \mathcal{M}_{i-1} \cap \mathcal{A}
(\mathcal{M}_{i-1}) , \quad i = 1, \ldots , \iota ,
\end{align*}
where $\mathcal{A}(\mathcal{M}_{i-1}) = \{a \in \Delta_N \colon
\sum_{x \in \mathcal{M}_{i-1}} x^\top P(u)a = 1 \: \text{for some}$
$u \in \Delta_M\}$, and the value of $\iota$ is determined by the
condition $\mathcal{M}_{\iota + 1} = \mathcal{M}_\iota$. Define the
sequence $\mathcal{Z}_j$ according to
\begin{align*}
\mathcal{Z}_0 &= \mathcal{M}^\ast , \\
\mathcal{Z}_j &= \Big\{a \in \Delta_N : \negthickspace \sum_{x \in
\mathcal{Z}_{j-1}} \negthickspace x^\top P(u)a = 1 \, \text{for} \,
\text{some} \, u \in \Delta_M \Big\} , \, j \geq 1 .
\end{align*}
Then $\mathcal{Z}_j \supseteq \mathcal{Z}_{j-1}$, and the PBN can be
stabilized to $\mathcal{M}$ with probability one by a feedback
$\mathcal{U} \colon \Delta_N \rightarrow \Delta_M$ if, and only if,
$\mathcal{Z}_\lambda = \Delta_N$ for some $\lambda \geq 1$.
\end{lemma}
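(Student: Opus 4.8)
The plan is to first pin down the meaning of the two iterations. The decreasing sequence $\mathcal{M}_0 \supseteq \mathcal{M}_1 \supseteq \cdots$ should stabilize at a set $\mathcal{M}^\ast$ that is the maximal \emph{control-invariant} subset of $\mathcal{M}$, in the sense that for every $a \in \mathcal{M}^\ast$ some $u \in \Delta_M$ satisfies $\sum_{x \in \mathcal{M}^\ast} x^\top P(u) a = 1$. I would first record this: the fixed-point condition $\mathcal{M}_{\iota+1} = \mathcal{M}_\iota$ gives $\mathcal{M}_\iota \subseteq \mathcal{A}(\mathcal{M}_\iota)$, which is exactly control-invariance of $\mathcal{M}^\ast$; maximality follows by a short induction showing that any control-invariant $\mathcal{N} \subseteq \mathcal{M}$ satisfies $\mathcal{N} \subseteq \mathcal{M}_i$ for all $i$. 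The sets $\mathcal{Z}_j$ are then the $j$-step \emph{controllable predecessors} of $\mathcal{M}^\ast$: $a \in \mathcal{Z}_j$ iff some control drives $a$ into $\mathcal{Z}_{j-1}$ with probability one.

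For the monotonicity $\mathcal{Z}_j \supseteq \mathcal{Z}_{j-1}$, I would argue inductively. The base step $\mathcal{Z}_1 \supseteq \mathcal{Z}_0$ is immediate from control-invariance of $\mathcal{M}^\ast = \mathcal{Z}_0$: each $a \in \mathcal{M}^\ast$ admits a control keeping it inside $\mathcal{M}^\ast$, so $a \in \mathcal{Z}_1$. For the inductive step I would use that the predecessor operator $\mathcal{W} \mapsto \{a : \sum_{x \in \mathcal{W}} x^\top P(u) a = 1 \text{ for some } u\}$ is monotone with respect to inclusion (enlarging the target can only raise a probability that is already capped at one), so $\mathcal{Z}_{j-1} \subseteq \mathcal{Z}_j$ propagates to $\mathcal{Z}_j \subseteq \mathcal{Z}_{j+1}$. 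Finiteness then forces the chain to stabilize at some $\mathcal{Z}_\lambda$.

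The heart of the lemma is the equivalence, which I would obtain by showing that $\mathcal{Z}_\lambda$ is precisely the set of initial states stabilizable to $\mathcal{M}$. For sufficiency, assuming $\mathcal{Z}_\lambda = \Delta_N$, I would assign to each state $a$ its level $\ell(a) = \min\{j : a \in \mathcal{Z}_j\}$ and define a feedback $\mathcal{U}$ that, on a state of level $j \geq 1$, picks a control sending it into $\mathcal{Z}_{j-1}$ with probability one, and on $\mathcal{M}^\ast$ picks a control keeping it inside $\mathcal{M}^\ast$. Under this feedback the level drops by at least one per step until the trajectory reaches $\mathcal{M}^\ast \subseteq \mathcal{M}$ and is then trapped there, so stabilization with probability one follows with $\tau = \lambda$. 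For necessity, given a feedback $\mathcal{U}$ stabilizing $\Sigma$ from a state $a$, I would first show that the eventual support $\bigcup_{j \geq 0}\mathrm{supp}(P_{\mathcal{U}}^{\tau+j} a)$ is forward-invariant under $P_{\mathcal{U}}$ and contained in $\mathcal{M}$, hence is a control-invariant subset of $\mathcal{M}$ and therefore lies in $\mathcal{M}^\ast$; consequently $\sum_{x \in \mathcal{M}^\ast} x^\top P_{\mathcal{U}}^{\tau} a = 1$. A downward induction on $\tau$ then finishes: if $\sum_{x \in \mathcal{M}^\ast} x^\top P_{\mathcal{U}}^{t} a = 1$, averaging over the one-step successors $y$ of $a$ (with weights $y^\top P_{\mathcal{U}} a$, which sum to one) forces $\sum_{x \in \mathcal{M}^\ast} x^\top P_{\mathcal{U}}^{t-1} y = 1$ for every active successor, so by induction each such $y$ lies in $\mathcal{Z}_{t-1}$ and thus $a \in \mathcal{Z}_t \subseteq \mathcal{Z}_\lambda$.

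I expect the necessity direction to be the main obstacle, because it requires translating the purely asymptotic notion of stabilization (being in $\mathcal{M}$ for all $k \geq \tau$) into the finite-horizon, probability-one reachability encoded by the $\mathcal{Z}_j$. The two delicate points are (i) identifying the eventual support as a control-invariant set so that it is absorbed into $\mathcal{M}^\ast$, and (ii) the averaging argument that decomposes a probability-one hit of $\mathcal{M}^\ast$ at time $t$ into probability-one hits at time $t-1$ from each reachable successor; both hinge on the elementary but essential fact that a convex combination of numbers in $[0,1]$ equals one only if every active term equals one.
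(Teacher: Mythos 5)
Your proposal is correct, and its overall skeleton matches the paper's proof: the monotonicity argument (base step from the fixed-point property of $\mathcal{M}^\ast$, inductive step from the capped-sum monotonicity of the predecessor operator), the sufficiency construction (your level function $\ell$ is exactly the paper's partition $\mathcal{Z}_1' = \mathcal{Z}_1$, $\mathcal{Z}_j' = \mathcal{Z}_j - \mathcal{Z}_{j-1}$, and your ``level drops each step'' claim is the paper's induction (\ref{eq18})), and the core necessity induction (your downward induction is precisely the paper's implication (\ref{eq17}), resting on the same fact that a convex combination of numbers in $[0,1]$ equals one only if every active term does). The one genuine divergence is the step in the necessity direction showing that stabilization to $\mathcal{M}$ forces concentration on $\mathcal{M}^\ast$. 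The paper argues time-slice by time-slice: for each fixed $k \geq \tau$ it shows every support point of $P_{\mathcal{U}}^k x_0$ lies in $\mathcal{A}(\mathcal{M}_0)$, deduces $\sum_{x \in \mathcal{M}_1} x^\top P_{\mathcal{U}}^k x_0 = 1$ by an inclusion--exclusion computation, and climbs the chain $\mathcal{M}_0 \supseteq \mathcal{M}_1 \supseteq \cdots$ by induction on $i$. You instead isolate the structural fact that $\mathcal{M}^\ast$ is the \emph{maximal} control-invariant subset of $\mathcal{M}$, observe that the eventual support $\bigcup_{j \geq 0} \mathrm{supp}\big(P_{\mathcal{U}}^{\tau+j} x_0\big)$ is forward-invariant under $P_{\mathcal{U}}$ (hence control-invariant via the controls $\mathcal{U}(b)$) and contained in $\mathcal{M}$, and absorb it into $\mathcal{M}^\ast$ by maximality. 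The two arguments do equivalent inductive work---your maximality lemma's induction over $i$ is essentially the paper's induction repackaged---but your version buys a clean, reusable characterization of the fixed point of the $\mathcal{M}_i$ iteration, whereas the paper's stays entirely inside the defining recursions and needs no auxiliary invariance notion.
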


\begin{proof} [Proof of Proposition~\ref{prop1}]
(a) Let $\mathcal{M}_i$ and $\mathcal{Z}_j$ be as in
Lemma~\ref{lem1a}. Let $\mathcal{M}_{\mathcal{R}}^\ast$ be the last
term of the sequence
\begin{align*}
\widetilde{\mathcal{M}}_0 &= \mathcal{M}_{\mathcal{R}} , \\
\widetilde{\mathcal{M}}_i &= \widetilde{\mathcal{M}}_{i-1} \cap
\mathcal{A}' (\widetilde{\mathcal{M}}_{i-1}) , \quad i = 1, \ldots ,
\iota ' ,
\end{align*}
where $\mathcal{A}' (\widetilde{\mathcal{M}}_{i-1}) = \{q \in
\Delta_{\widetilde{N}} \colon \sum_{z \in
\widetilde{\mathcal{M}}_{i-1}} z^\top \widetilde{P}(u)q = 1 \:
\text{for some}$ $u \in \Delta_M\}$, and the value of $\iota'$ is
determined by the condition $\widetilde{\mathcal{M}}_{\iota' + 1} =
\widetilde{\mathcal{M}}_{\iota'}$. Define the sequence
$\widetilde{\mathcal{Z}}_j$ according to
\begin{align*}
\widetilde{\mathcal{Z}}_0 &= \mathcal{M}_{\mathcal{R}}^\ast , \\
\widetilde{\mathcal{Z}}_j &= \bigg\{q \in \Delta_{\widetilde{N}} :
\negthickspace \sum_{z \in \widetilde{\mathcal{Z}}_{j-1}}
\negthickspace z^\top \widetilde{P}(u)q = 1 \, \text{for} \,
\text{some} \, u \in \Delta_M \bigg\} , \, j \geq 1 .
\end{align*}
We show that for $j \geq 0$,
\begin{equation} \label{eq20}
x \in \mathcal{Z}_j \Longleftrightarrow C x \in
\widetilde{\mathcal{Z}}_j .
\end{equation}
First, we claim that
\begin{equation} \label{eq21}
x \in \mathcal{M}_i \Longleftrightarrow C x \in
\widetilde{\mathcal{M}}_i .
\end{equation}
Indeed, if $Cx \in \widetilde{\mathcal{M}}_0$, then there exists $x'
\in \mathcal{M}$ such that $Cx = Cx'$, and hence $(x,x') \in
\mathcal{R} \subseteq \mathcal{S}$, forcing $x \in \mathcal{M}$
since $\mathcal{S}$ is the equivalence relation yielded by the
partition $\left\{ \mathcal{M}, \Delta_N - \mathcal{M} \right\}$.
This shows that $Cx \in \widetilde{\mathcal{M}}_0 \Rightarrow x \in
\mathcal{M}_0$. The converse implication is trivial. Assume by
induction that $x \in \mathcal{M}_{i-1} \Longleftrightarrow C x \in
\widetilde{\mathcal{M}}_{i-1}$. Denoting $I(z) = \{x \in \Delta_N
\colon Cx = z \}$ for $z \in \Delta_{\widetilde{N}}$, which is
nonempty since $C$ is supposed to have full row rank, then
$\mathcal{M}_{i-1}$ can be partitioned as the disjoint union
$\mathcal{M}_{i-1} = \bigcup_{z \in \widetilde{\mathcal{M}}_{i-1}}
I(z)$. Indeed, the sets $I(z)$, $z \in
\widetilde{\mathcal{M}}_{i-1}$, are clearly mutually disjoint, and
for any $x \in \Delta_N$, $x \in \mathcal{M}_{i-1}$ if and only if
$Cx \in \widetilde{\mathcal{M}}_{i-1}$, if and only if $x \in I(z)$
for some $z \in \widetilde{\mathcal{M}}_{i-1}$. Suppose $x \in
\Delta_N$, $u \in \Delta_M$, and let $q = Cx$. Then
\begin{equation*}
\sum_{b \in \mathcal{M}_{i-1}}b^\top P(u)x = \negthickspace \sum_{z
\in \widetilde{\mathcal{M}}_{i-1}} \sum_{b \in I(z)} b^\top P(u)x =
\negthickspace \sum_{z \in \widetilde{\mathcal{M}}_{i-1}}
\negthickspace z^\top \widetilde{P}(u)q ,
\end{equation*}
where the second equality follows from (\ref{eq4a}) in the proof of
Theorem~\ref{thm1}. This immediately implies that $x \in
\mathcal{A}(\mathcal{M}_{i-1})$ if and only if $Cx \in
\mathcal{A}'(\widetilde{\mathcal{M}}_{i-1})$, and hence $x \in
\mathcal{M}_i$ if and only if $Cx \in \widetilde{\mathcal{M}}_i$.

The proof of (\ref{eq20}) is easily obtained by induction on $j$. It
follows from (\ref{eq21}) that $x \in \mathcal{Z}_0$ if and only if
$Cx \in \widetilde{\mathcal{Z}}_0$, establishing the base step. The
induction step is similar to that done in the proof of (\ref{eq21}).

Since $C$ is of full row rank, we conclude from (\ref{eq20}) that
$\mathcal{Z}_j = \Delta_N$ if and only if $\widetilde{\mathcal{Z}}_j
= \Delta_{\widetilde{N}}$, and the proof of (a) follows by
Lemma~\ref{lem1a}.

(b) Define the matrix $\widetilde{P}_{\mathcal{U}_{\mathcal{R}}}$
for $\Sigma_{\mathcal{R}}$ in the same way as $P_{\mathcal{U}}$ is
defined for $\Sigma$. We first prove that, for any $a \in \Delta_N$,
$z \in \Delta_{\widetilde{N}}$, and integer $k \geq 1$, we have
\begin{equation} \label{eq7}
\sum_{x \in I(z)} x^\top P_{\mathcal{U}}^k a = z^\top
\widetilde{P}_{\mathcal{U}_{\mathcal{R}}}^k q ,
\end{equation}
where $I(z) = \{x \in \Delta_N \colon Cx = z \}$ and $q = Ca$. The
proof is by induction on $k$. Since $P_{\mathcal{U}} a =
P(\mathcal{U}(a))a$ by the construction of $P_{\mathcal{U}}$, it
follows from (\ref{eq4a}) in the proof of Theorem~\ref{thm1} that
\begin{equation*}
\sum_{x \in I(z)} x^\top P_{\mathcal{U}} a = \sum_{x \in I(z)}
x^\top P(\mathcal{U}(a)) a = z^\top \widetilde{P}(\mathcal{U}(a)) q
,
\end{equation*}
and since $\mathcal{U}(a) = \mathcal{U}_{\mathcal{R}} (Ca) =
\mathcal{U}_{\mathcal{R}}(q)$, the above is equal to $z^\top
\widetilde{P}(\mathcal{U}_{\mathcal{R}}(q))q = z^\top
\widetilde{P}_{\mathcal{U}_{\mathcal{R}}}q$. This gives (\ref{eq7})
for $k = 1$. Assume as induction hypothesis that the statement holds
for $k-1$. Decomposing the $N \times N$ identity matrix as $\sum_{b
\in \Delta_N} b b^\top$, we have
\begin{align} \label{eq8}
&\sum_{x \in I(z)} x^\top P_{\mathcal{U}}^k a = \sum_{x \in I(z)}
x^\top P_{\mathcal{U}} \bigg( \sum_{b \in \Delta_N} b b^\top \bigg)
P_{\mathcal{U}}^{k-1} a \notag \\
&\qquad = \sum_{x \in I(z)} \sum_{b \in \Delta_N} x^\top
P_{\mathcal{U}} b
b^\top P_{\mathcal{U}}^{k-1} a \notag \\
&\qquad = \sum_{i=1}^{\widetilde{N}} \sum_{b \in
I(\delta_{\widetilde{N}}^i)} \bigg(\sum_{x \in I(z)} x^\top
P_{\mathcal{U}} b \bigg) b^\top P_{\mathcal{U}}^{k-1} a .
\end{align}
The last equality holds true since $\Delta_N$ is the disjoint union
of the sets $I(\delta_{\widetilde{N}}^i) = \{x \colon Cx =
\delta_{\widetilde{N}}^i \}$, $i = 1, \ldots, \widetilde{N}$. It
follows from the case $k=1$ that $\sum_{x \in I(z)} x^\top
P_{\mathcal{U}} b = z^\top \widetilde{P}_{\mathcal{U}_{\mathcal{R}}}
\delta_{\widetilde{N}}^i$ for all $b \in
I(\delta_{\widetilde{N}}^i)$, and the right-hand side of (\ref{eq8})
is equal to the following expression:
\begin{equation} \label{eq11a}
\sum_{i=1}^{\widetilde{N}} z^\top
\widetilde{P}_{\mathcal{U}_{\mathcal{R}}} \delta_{\widetilde{N}}^i
\bigg( \sum_{b \in I(\delta_{\widetilde{N}}^i)} b^\top
P_{\mathcal{U}}^{k-1} a \bigg) .
\end{equation}
According to the induction hypothesis, we have for each $1 \leq i
\leq \widetilde{N}$,
\begin{equation*}
\sum_{b \in I(\delta_{\widetilde{N}}^i)}b^\top
P_{\mathcal{U}}^{k-1}a = (\delta_{\widetilde{N}}^i)^\top
\widetilde{P}_{\mathcal{U}_{\mathcal{R}}}^{k-1}q ,
\end{equation*}
and substituting this into (\ref{eq11a}) we get
\begin{align*}
&\sum_{x \in I(z)} x^\top P_{\mathcal{U}}^k a =
\sum_{i=1}^{\widetilde{N}} z^\top
\widetilde{P}_{\mathcal{U}_{\mathcal{R}}} \delta_{\widetilde{N}}^i
\Big[(\delta_{\widetilde{N}}^i)^\top
\widetilde{P}_{\mathcal{U}_{\mathcal{R}}}^{k-1} q \Big] \\
&\qquad = z^\top \widetilde{P}_{\mathcal{U}_{\mathcal{R}}} \bigg[
\sum_{i=1}^{\widetilde{N}} \delta_{\widetilde{N}}^i
(\delta_{\widetilde{N}}^i)^\top \bigg]
\widetilde{P}_{\mathcal{U}_{\mathcal{R}}}^{k-1} q = z^\top
\widetilde{P}_{\mathcal{U}_{\mathcal{R}}}^k q ,
\end{align*}
which is (\ref{eq7}).

From the proof of (a), we know that $x \in \mathcal{M}$ if and only
if $Cx \in \mathcal{M}_{\mathcal{R}}$, and consequently, we can
write $\mathcal{M}$ as the disjoint union $\mathcal{M} = \bigcup_{z
\in \mathcal{M}_{\mathcal{R}}} I(z)$. The proof of part (b) is now
obvious. Suppose $x_0 \in \Delta_N$. Let $x_{\mathcal{R}}^0 = C
x_0$. Then for each integer $k \geq 1$ we have
\begin{equation*}
\sum_{x \in \mathcal{M}} x^\top P_{\mathcal{U}}^k x_0 = \sum_{z \in
\mathcal{M}_{\mathcal{R}}} \sum_{x \in I(z)} x^\top
P_{\mathcal{U}}^k x_0 = \sum_{z \in \mathcal{M}_{\mathcal{R}}}
z^\top \widetilde{P}_{\mathcal{U}_{\mathcal{R}}}^k x_{\mathcal{R}}^0
,
\end{equation*}
from which part (b) follows immediately.
\end{proof}

\subsection{Optimal Control}

\label{sec4.2}

Let us consider the following optimal control problem, introduced in
\cite{datta2003}.

\begin{problem} \label{plm1}
Consider a PBN as in (\ref{eq2}). Given an initial state $x_0 \in
\Delta_N$ and a finite time horizon $T \in \mathbb{Z}^+$, find a
control policy, $u(t) = \mathcal{U}^\ast (t, x(t))$ for $0 \leq t
\leq T-1$, that minimizes the cost functional
\begin{equation*}
J = \mathbb{E} \bigg[ g(x(T)) + \sum_{t=0}^{T-1} l(u(t), x(t))
\bigg] ,
\end{equation*}
where $l(u,x)$ and $g(x)$ are real-valued functions defined on
$\Delta_M \times \Delta_N$ and $\Delta_N$, respectively.
\end{problem}

We show that the solution to Problem~\ref{plm1} can be found by
considering the problem for a suitably chosen quotient system. To
this end, let $\mathcal{S}$ be the equivalence relation on
$\Delta_N$ given by
\begin{align} \label{eq8a}
(x, x') \in \mathcal{S} &\Longleftrightarrow \text{$g(x) = g(x')$
and} \notag \\
& \qquad \;\, \text{$l(u, x) = l(u, x')$ for all $u \in \Delta_M$}.
\end{align}
We note that, if $C \in \mathcal{L}^{\widetilde{N} \times N}$ has
full row rank, and if the equivalence relation $\mathcal{R}$ induced
by $C$ satisfies $\mathcal{R} \subseteq \mathcal{S}$, then every $z
\in \Delta_{\widetilde{N}}$ can be written as $z = Cx$ for some $x
\in \Delta_N$ and the function $g$ is constant on the set $I(z) =
\{x \in \Delta_N \colon Cx = z\}$. Hence, the map $g_{\mathcal{R}}
\colon \Delta_{\widetilde{N}} \rightarrow \mathbb{R}$, given by
\begin{equation} \label{eq9}
g_{\mathcal{R}}(z) = g_{\mathcal{R}} (Cx) = g(x) ,
\end{equation}
is well defined. For the same reason, the map $l_{\mathcal{R}}
\colon \Delta_M \times \Delta_{\widetilde{N}} \rightarrow
\mathbb{R}$ defined by
\begin{equation} \label{eq10}
l_{\mathcal{R}}(u, z) = l_{\mathcal{R}}(u, Cx) = l(u, x)
\end{equation}
is also well defined. We can state the following proposition.

\begin{proposition} \label{prop2}
Let $\Sigma$ be a PBN described by (\ref{eq2}) and consider
Problem~\ref{plm1} with given $x_0$ and $T$. Suppose that
$\mathcal{S}$ is the equivalence relation given by (\ref{eq8a}),
$\mathcal{R} \subseteq \Delta_N \times \Delta_N$ is an equivalence
relation induced by a full row rank matrix $C \in
\mathcal{L}^{\widetilde{N} \times N}$, $\mathcal{R} \subseteq
\mathcal{S}$, and (\ref{eq4}) holds. Let $\Sigma_{\mathcal{R}}$ be
the probabilistic Boolean system constructed in Theorem~\ref{thm1},
and define $J_{\mathcal{R}} = \mathbb{E} \, \big[ g_\mathcal{R}
(x_{\mathcal{R}}(T)) + \sum_{t=0}^{T-1} l_{\mathcal{R}}(u(t),
x_{\mathcal{R}}(t)) \big]$, where $g_{\mathcal{R}}$ and
$l_{\mathcal{R}}$ are given by (\ref{eq9}) and (\ref{eq10}). Suppose
that $(t, x_{\mathcal{R}}) \mapsto \mathcal{U}_{\mathcal{R}}^\ast
(t, x_{\mathcal{R}})$ is an optimal control policy solving
Problem~\ref{plm1} with $\Sigma$, $x_0$, and $J$ replaced by
$\Sigma_{\mathcal{R}}$, $x_{\mathcal{R}}^0 = C x_0$, and
$J_{\mathcal{R}}$, respectively. Then the control policy given by
$(t, x) \mapsto \mathcal{U}^\ast (t, x) =
\mathcal{U}_{\mathcal{R}}^\ast (t, Cx)$ is an optimal control policy
for $\Sigma$. Moreover, let $J^\ast$ be the optimal value of $J$
given the initial state $x_0$ and let $J_{\mathcal{R}}^\ast$ be the
optimal value of $J_{\mathcal{R}}$ associated with
$x_{\mathcal{R}}^0 = C x_0$. Then $J^\ast = J_{\mathcal{R}}^\ast$.
\end{proposition}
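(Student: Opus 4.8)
The plan is to argue by dynamic programming, showing that the optimal cost-to-go functions of $\Sigma$ and of $\Sigma_{\mathcal{R}}$ coincide under the identification $x \leftrightarrow Cx$. For $\Sigma$, I would define the value functions $V_t \colon \Delta_N \to \mathbb{R}$ backward in time by $V_T(x) = g(x)$ and
\[
V_t(x) = \min_{u \in \Delta_M} \bigg[ l(u,x) + \sum_{x' \in \Delta_N} (x')^\top P(u) x \, V_{t+1}(x') \bigg], \quad t = T-1, \ldots, 0,
\]
so that, by the standard Bellman principle for finite-horizon controlled Markov chains, $J^\ast = V_0(x_0)$ and the stagewise minimizers recover an optimal feedback. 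I would define analogous functions $W_t \colon \Delta_{\widetilde{N}} \to \mathbb{R}$ for $\Sigma_{\mathcal{R}}$, using $g_{\mathcal{R}}$, $l_{\mathcal{R}}$, and $\widetilde{P}(u)$ in place of $g$, $l$, and $P(u)$, so that $J_{\mathcal{R}}^\ast = W_0(Cx_0)$ and the minimizer realizing $W_t$ is precisely $\mathcal{U}_{\mathcal{R}}^\ast(t, \cdot)$.

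The central claim, proved by backward induction on $t$, is that $V_t(x) = W_t(Cx)$ for every $x \in \Delta_N$. The base case $t = T$ is immediate from (\ref{eq9}). For the inductive step I would fix $x$, write $z = Cx$, and observe that $l(u,x) = l_{\mathcal{R}}(u,z)$ by (\ref{eq10}) (this is where $\mathcal{R} \subseteq \mathcal{S}$ is used). Since the induction hypothesis $V_{t+1}(x') = W_{t+1}(Cx')$ makes the future-value term depend on $x'$ only through $Cx'$, I would group the inner sum over the fibers $I(z') = \{x' \in \Delta_N \colon Cx' = z'\}$ and invoke the key identity (\ref{eq4a}) from the proof of Theorem~\ref{thm1}, namely $\sum_{x' \in I(z')} (x')^\top P(u) x = (z')^\top \widetilde{P}(u) z$. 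This rewrites the bracketed quantity defining $V_t(x)$ term-by-term (for each fixed $u$) as the one defining $W_t(z)$, so taking the minimum over $u$ gives $V_t(x) = W_t(z) = W_t(Cx)$.

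With the identity $V_t \equiv W_t \circ C$ established, both conclusions follow at once. First, $J^\ast = V_0(x_0) = W_0(Cx_0) = J_{\mathcal{R}}^\ast$. Second, because the bracketed expressions defining $V_t(x)$ and $W_t(Cx)$ agree for each $u$, any minimizer of the latter is also a minimizer of the former; in particular $u = \mathcal{U}_{\mathcal{R}}^\ast(t, Cx)$ attains $V_t(x)$. Hence the lifted feedback $(t,x) \mapsto \mathcal{U}^\ast(t,x) = \mathcal{U}_{\mathcal{R}}^\ast(t, Cx)$ satisfies the Bellman optimality equation for $\Sigma$ at every stage and is therefore an optimal policy.

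I expect the main obstacle to be conceptual rather than computational, and it lives entirely in the inductive step. A priori an optimal policy for $\Sigma$ might exploit distinctions between states lying in the same $\mathcal{R}$-class, so it is not evident that the value function should be constant on classes; the content of the argument is precisely that it is. The resolution is that (\ref{eq4a}) encodes the fact (guaranteed by property (\ref{eq4})) that all states in a fiber have identical aggregated transition probabilities into each target fiber, so once the terminal and running costs are class-functions the class-measurability of $V_t$ propagates backward through the recursion. Making this fiber decomposition of the expectation precise via (\ref{eq4a}) is the one step that requires care; the remainder is routine bookkeeping.
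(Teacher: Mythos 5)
Your backward induction establishing $V_t(x) = W_t(Cx)$ is correct: the base case is (\ref{eq9}), and the inductive step goes through exactly as you describe by grouping the sum over the fibers $I(z')$ and invoking (\ref{eq4a}) together with (\ref{eq10}). This is also a genuinely different organization from the paper's: the paper never identifies the two value functions directly, but instead proves Lemma~\ref{lem1} (that $\Sigma$ admits an optimal policy constant on $\mathcal{R}$-classes, via a class-constancy induction on the functions $G(t,x,u)$ carried out inside $\Sigma$ alone) and Lemma~\ref{lem2} (that a policy of the form $\mathcal{U}(t,x) = \widetilde{\mathcal{U}}(t,Cx)$ incurs on $\Sigma$ exactly the cost that $\widetilde{\mathcal{U}}$ incurs on $\Sigma_{\mathcal{R}}$), and then sandwiches $J^\ast$ and $J^\ast_{\mathcal{R}}$ between the two. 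Your identity delivers $J^\ast = V_0(x_0) = W_0(Cx_0) = J^\ast_{\mathcal{R}}$ in one stroke, which is a cleaner route to that half of the claim.

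The gap is in your optimality argument for the lifted policy. You assert that ``the minimizer realizing $W_t$ is precisely $\mathcal{U}^\ast_{\mathcal{R}}(t,\cdot)$,'' but the proposition only assumes that $\mathcal{U}^\ast_{\mathcal{R}}$ solves Problem~\ref{plm1} for the single initial state $x^0_{\mathcal{R}} = Cx_0$. Optimality relative to a fixed initial state does not force a policy to satisfy the Bellman optimality equation at every state and stage: at any state reached with probability zero from $x^0_{\mathcal{R}}$ under the policy itself, $\mathcal{U}^\ast_{\mathcal{R}}$ may select an arbitrary, strictly suboptimal action without affecting $J_{\mathcal{R}}$. (For instance, if every input keeps the quotient state $z_1$ at $z_1$ and $x^0_{\mathcal{R}} = z_1$, a policy that picks a needlessly expensive input at some other state $z_2$ is still optimal for this initial state.) Consequently you cannot conclude that $u = \mathcal{U}^\ast_{\mathcal{R}}(t,Cx)$ attains the minimum defining $V_t(x)$, nor that the lifted feedback satisfies the Bellman equation for $\Sigma$ everywhere --- in general it will not. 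The conclusion survives, but it needs a different final step: rerun your induction \emph{without} the minimum, i.e., show that the cost-to-go of the lifted policy on $\Sigma$ at $x$ equals the cost-to-go of $\mathcal{U}^\ast_{\mathcal{R}}$ on $\Sigma_{\mathcal{R}}$ at $Cx$; this works because the lifted policy depends on $x$ only through $Cx$, so (\ref{eq4a}) applies verbatim at each stage. That yields $J(x_0,\mathcal{U}^\ast) = J_{\mathcal{R}}(x^0_{\mathcal{R}},\mathcal{U}^\ast_{\mathcal{R}}) = J^\ast_{\mathcal{R}}$, which combined with your $J^\ast = J^\ast_{\mathcal{R}}$ proves optimality of $\mathcal{U}^\ast$. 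This policy-evaluation step is precisely what the paper's Lemma~\ref{lem2} supplies (there, via matching the distributions of the stage costs).
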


The proof of the proposition follows from the following two lemmas.

\begin{lemma} \label{lem1}
Consider Problem~\ref{plm1} with given $x_0$ and $T$. Let
$\mathcal{S}$, $\mathcal{R}$, and $C$ be as in
Proposition~\ref{prop2}. Then there exists an optimal control policy
$(t, x) \mapsto \overline{\mathcal{U}} (t,x)$ with the property that
$\overline{\mathcal{U}}(t, x) = \overline{\mathcal{U}}(t, x')$ for
all $0 \leq t \leq T-1$ and all $x, x' \in \Delta_N$ such that $Cx =
Cx'$.
\end{lemma}

\begin{proof}
Consider the following dynamic programming algorithm (adapted from
\cite[Proposition 1.3.1]{bertsekasbook}; see also \cite{datta2003}):
\begin{align*}
&H(T, x) = g(x) , \quad x \in \Delta_N , \\
&H(t, x) = \min_{u \in \Delta_M} \bigg\{ l(u,x) + \sum_{\xi \in
\Delta_N} H(t+1, \xi) \xi^{\top} P(u) x \bigg\} , \\
&\qquad \qquad \qquad x \in \Delta_N , \quad t = T-1, \ldots, 1, 0,
\end{align*}
where $P(u)$ is as in (\ref{eq2a}). If we let
\begin{equation*}
G(t,x,u) = l(u,x) + \sum_{\xi \in \Delta_N} H(t+1, \xi) \xi^{\top}
P(u) x ,
\end{equation*}
and define
\begin{equation*}
\overline{\mathcal{U}}(t, x) \in \arg \min_{u \in \Delta_M} G(t, x,
u), \;\; 0 \leq t \leq T-1, \;\; x \in \Delta_N ,
\end{equation*}
then the control law given by $(t, x) \mapsto
\overline{\mathcal{U}}(t,x)$ is optimal
\cite{bertsekasbook,datta2003}. We will show that for $0 \leq t \leq
T-1$,
\begin{multline} \label{eq10a}
G(t,x,u) = G(t, x', u) , \quad \forall u \in \Delta_M , \;\; \forall
x, x' \in \Delta_N \\
\text{with} \; C x = C x' .
\end{multline}
Then we can find $\overline{\mathcal{U}}(t,x) \in \arg \min_u
G(t,x,u)$ with the desired property. This will prove the lemma.

Fix $u \in \Delta_M$ and let $x, x' \in \Delta_N$ be such that $Cx =
Cx'$. Since $(x,x') \in \mathcal{R} \subseteq \mathcal{S}$, it
follows from (\ref{eq8a}) that
\begin{equation} \label{eq11}
l(u,x) = l(u, x') .
\end{equation}
For each $z \in \Delta_{\widetilde{N}}$, since $H(T, \cdot) =
g(\cdot)$ is constant on the set $I(z) = \{\xi \in \Delta_N \colon C
\xi = z\}$ (cf. the statement following (\ref{eq8a})) and since
\begin{equation*}
\sum_{\xi \in I(z)} \xi^\top P(u) x = \sum_{\xi \in I(z)} \xi^\top
P(u) x'
\end{equation*}
by (\ref{eq4}), we have
\begin{equation*}
\sum_{\xi \in I(z)} H(T, \xi) \xi^\top P(u) x = \sum_{\xi \in I(z)}
H(T, \xi) \xi^\top P(u) x' .
\end{equation*}
Hence,
\begin{equation*}
\sum_{\xi \in \Delta_N} H(T, \xi) \xi^\top P(u) x = \sum_{\xi \in
\Delta_N} H(T, \xi) \xi^\top P(u) x' ,
\end{equation*}
since $\Delta_N$ is the disjoint union of $I(z)$, $z \in
\Delta_{\widetilde{N}}$. This together with (\ref{eq11}) gives
$G(T-1, x, u) = G(T-1, x', u)$. Thus (\ref{eq10a}) is true if $t =
T-1$.

Note that if $t \leq T-1$ and if (\ref{eq10a}) is true for $t$, then
for any $\xi, \xi' \in \Delta_N$ with $C \xi = C \xi'$, we have
\begin{equation*}
H(t, \xi) = \min_{u \in \Delta_M} G(t, \xi, u) = \min_{u \in
\Delta_M} G(t, \xi ' , u) = H(t, \xi') .
\end{equation*}
Thus with this $t$ fixed, the function $H(t, \cdot)$ is constant on
each of the sets $I(z) = \{\xi \colon C \xi = z\}$. Then by an
argument similar to that in the previous paragraph, we can show that
(\ref{eq10a}) is true for $t-1$ also, and so working by downward
induction on $t$, we conclude that (\ref{eq10a}) holds true for all
$0 \leq t \leq T-1$, as required. The proof is complete.%
\end{proof}

\begin{lemma} \label{lem2}
Let the notation be as in the statement of Proposition~\ref{prop2}.
If the initial states of $\Sigma$ and $\Sigma_{\mathcal{R}}$ satisfy
$C x_0 = x_{\mathcal{R}}^0$, and if the two control policies $(t,x)
\mapsto \mathcal{U}(t, x)$ and $(t, x_{\mathcal{R}}) \mapsto
\widetilde{\mathcal{U}}(t, x_{\mathcal{R}})$ satisfy $\mathcal{U}(t,
x) = \widetilde{\mathcal{U}}(t, Cx)$ for all $0 \leq t \leq T-1$ and
$x \in \Delta_N$, then the cost functionals $J$ and
$J_{\mathcal{R}}$ have the same value.
\end{lemma}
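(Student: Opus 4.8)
The plan is to show that $J$ and $J_{\mathcal{R}}$ coincide by establishing, via a coupling argument, that the distribution of the quotient state $x_{\mathcal{R}}(t)$ generated by $\Sigma_{\mathcal{R}}$ matches the distribution of $Cx(t)$ generated by $\Sigma$, when the two systems are driven by control policies related through $\mathcal{U}(t,x) = \widetilde{\mathcal{U}}(t,Cx)$. Since $g$ and $l$ are constant on each fibre $I(z) = \{x \colon Cx = z\}$ (because $\mathcal{R} \subseteq \mathcal{S}$), the cost of $\Sigma$ depends on its trajectory only through the images $Cx(t)$, and matching these marginal distributions will force the two expected costs to agree term by term.

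First I would reduce everything to a statement about one-step transition probabilities. For any fixed time $t$ and any control action $u \in \Delta_M$, the key identity (\ref{eq4a}) from the proof of Theorem~\ref{thm1} gives
\begin{equation*}
\sum_{x \in I(z')} x^\top P(u) x = z'^\top \widetilde{P}(u) (Cx)
\end{equation*}
for every $x \in \Delta_N$ and $z' \in \Delta_{\widetilde{N}}$; that is, the probability that $\Sigma$ moves from $x$ into the fibre $I(z')$ under input $u$ equals the probability that $\Sigma_{\mathcal{R}}$ moves from $Cx$ to $z'$. Because the chosen policies satisfy $\mathcal{U}(t,x) = \widetilde{\mathcal{U}}(t,Cx)$, the control input applied to $\Sigma$ in state $x$ and the input applied to $\Sigma_{\mathcal{R}}$ in state $Cx$ are identical, so this fibre-transition identity holds under the actual controlled dynamics as well.

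Next I would prove by induction on $t$ that, with $x_{\mathcal{R}}^0 = Cx_0$, the controlled state distributions satisfy
\begin{equation*}
\sum_{x \in I(z)} \Pr[x(t) = x] = \Pr[x_{\mathcal{R}}(t) = z]
\qquad \text{for all } z \in \Delta_{\widetilde{N}}.
\end{equation*}
The base case $t = 0$ is immediate from $x_{\mathcal{R}}^0 = Cx_0$ (both distributions are point masses on $x_0$ and $Cx_0 = x_{\mathcal{R}}^0$). For the inductive step I would expand $\Pr[x_{\mathcal{R}}(t+1) = z']$ by conditioning on $x_{\mathcal{R}}(t)$, regroup the states of $\Sigma$ by their fibres using the induction hypothesis, and apply the one-step identity from the previous paragraph to rewrite each quotient transition probability as a sum of $\Sigma$-transition probabilities into $I(z')$. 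This is essentially the same fibre-summation manipulation that underlies equations (\ref{eq6}) and (\ref{eq8}) earlier in the paper.

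Finally, I would assemble the cost comparison. Writing $J = \sum_{t=0}^{T-1} \mathbb{E}[l(\mathcal{U}(t,x(t)), x(t))] + \mathbb{E}[g(x(T))]$ and using that $l(\mathcal{U}(t,x),x) = l_{\mathcal{R}}(\widetilde{\mathcal{U}}(t,Cx),Cx)$ and $g(x) = g_{\mathcal{R}}(Cx)$ are constant on each fibre, each expectation over $\Delta_N$ collapses to a sum over fibres weighted by $\sum_{x \in I(z)} \Pr[x(t) = x]$, which by the distributional identity equals $\Pr[x_{\mathcal{R}}(t) = z]$. Term by term this reproduces exactly the corresponding expectation in $J_{\mathcal{R}}$, yielding $J = J_{\mathcal{R}}$. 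The main obstacle is the bookkeeping in the inductive step: correctly decomposing $\Delta_N$ into the disjoint fibres $I(z)$ and interchanging the order of summation so that the quotient one-step kernel $\widetilde{P}$ emerges cleanly; all the analytic content is supplied by (\ref{eq4a}), so the difficulty is organizational rather than conceptual.
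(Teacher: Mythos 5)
Your proposal is correct and takes essentially the same route as the paper: the paper likewise establishes, by induction using (\ref{eq4a}), that $\sum_{x \in I(z)} x^\top P_{t-1}\cdots P_0 x_0 = z^\top \widetilde{P}_{t-1}\cdots \widetilde{P}_0 x_{\mathcal{R}}^0$ (which is exactly your distributional identity for the closed-loop chains), and then exploits fibre-constancy of the costs to match the two expectations, differing only cosmetically in that it compares probabilities of cost level sets rather than collapsing the expectations fibre by fibre. The only blemish is notational: in your first display the symbol $x$ serves both as the summation variable and as the fixed initial state, so it should read $\sum_{\xi \in I(z')} \xi^\top P(u)x = (z')^\top \widetilde{P}(u)(Cx)$.
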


\begin{proof}
For each $0 \leq t \leq T-1$, let $P_t$ be the matrix whose $i$th
column is the $i$th column of the matrix $P(\mathcal{U}(t,
\delta_N^i))$, and let $\widetilde{P}_t$ be the matrix in which the
$j$th column is the $j$th column of
$\widetilde{P}(\widetilde{\mathcal{U}}(t,
\delta_{\widetilde{N}}^j))$. With a similar argument to that in
proving (\ref{eq7}), it is easy to see that for any $a \in
\Delta_N$, $z \in \Delta_{\widetilde{N}}$, and $1 \leq t \leq T$, we
have $\sum_{x \in I(z)} x^\top P_{t-1} P_{t-2} \cdots P_0 a = z^\top
\widetilde{P}_{t-1} \widetilde{P}_{t-2} \cdots \widetilde{P}_0 q$,
where $I(z) = \{x \in \Delta_N \colon Cx = z\}$ and $q = Ca$. Fix $1
\leq t \leq T-1$, and fix $s \in \{l(u,x) \colon (u,x) \in \Delta_M
\times \Delta_N \}$. Define
\begin{align*}
\mathcal{M}(t,s) &= \{x \in \Delta_N \colon l(\mathcal{U}(t, x), x)
= s \} , \\
\widetilde{\mathcal{M}}(t,s) &= \{x_{\mathcal{R}} \in
\Delta_{\widetilde{N}} \colon l_{\mathcal{R}}
(\widetilde{\mathcal{U}}(t, x_{\mathcal{R}}), x_{\mathcal{R}}) = s
\} .
\end{align*}
Since $l(\mathcal{U}(t,x),x) = l(\widetilde{\mathcal{U}}(t, Cx), x)
= l_{\mathcal{R}}(\widetilde{\mathcal{U}}(t, Cx), Cx)$, it follows
that $x \in \mathcal{M}(t,s)$ if and only if $Cx \in
\widetilde{\mathcal{M}}(t,s)$, and hence $\mathcal{M}(t,s)$ can be
written as the disjoint union $\mathcal{M}(t,s) = \bigcup_{z \in
\widetilde{\mathcal{M}}(t,s)} I(z)$. Consequently,
\begin{align*}
&\mathbb{P} \big\{l(\mathcal{U}(t, x(t)), x(t)) = s \big\} =
\mathbb{P} \big\{x(t) \in \mathcal{M}(t,s) \big\} \\
&= \negthickspace \sum_{z \in \widetilde{\mathcal{M}}(t,s)} \sum_{x
\in I(z)} \negthickspace x^\top P_{t-1} \cdots P_0 x_0 = \sum_{z \in
\widetilde{\mathcal{M}}(t,s)} \negthickspace z^\top
\widetilde{P}_{t-1} \cdots \widetilde{P}_0 x_{\mathcal{R}}^0 \\
&= \mathbb{P} \big\{x_{\mathcal{R}}(t) \in
\widetilde{\mathcal{M}}(t,s) \big\} = \mathbb{P} \big\{
l_{\mathcal{R}} (\widetilde{\mathcal{U}}(t, x_{\mathcal{R}}(t)),
x_{\mathcal{R}}(t)) = s \big\} .
\end{align*}
Furthermore,
\begin{equation*}
l(\mathcal{U}(0, x_0), x_0) =
l_{\mathcal{R}}(\widetilde{\mathcal{U}}(0, C x_0), C x_0) =
l_{\mathcal{R}}(\widetilde{\mathcal{U}}(0, x_{\mathcal{R}}^0),
x_{\mathcal{R}}^0) .
\end{equation*}
Thus, we get
\begin{equation*}
\mathbb{E} \big[ l(\mathcal{U}(t, x(t)), x(t)) \big] = \mathbb{E}
\big[ l_{\mathcal{R}} (\widetilde{\mathcal{U}}(t,
x_{\mathcal{R}}(t)), x_{\mathcal{R}}(t)) \big]
\end{equation*}
for all $0 \leq t \leq T-1$. A similar argument shows that
$\mathbb{E}\, [g(x(T))] = \mathbb{E} \,
[g_{\mathcal{R}}(x_{\mathcal{R}}(T))]$. The assertion of the lemma
follows from the linearity of expectations.
\end{proof}

\begin{proof} [Proof of Proposition~\ref{prop2}]
Let $J(x_0, \mathcal{U}^\ast)$ be the value of $J$ for the initial
state $x_0$ and the control policy $(t,x) \mapsto \mathcal{U}^\ast
(t,x) = \mathcal{U}_{\mathcal{R}}^\ast (t, Cx)$, and let
$J_{\mathcal{R}}(x_{\mathcal{R}}^0, \mathcal{U}_{\mathcal{R}}^\ast)$
be the value of $J_{\mathcal{R}}$ when the initial state is
$x_{\mathcal{R}}^0 = C x_0$ and the control policy
$(t,x_{\mathcal{R}}) \mapsto \mathcal{U}_{\mathcal{R}}^\ast (t,
x_{\mathcal{R}})$ is applied. By Lemma~\ref{lem2}, we have $J(x_0,
\mathcal{U}^\ast) = J_{\mathcal{R}}(x_{\mathcal{R}}^0,
\mathcal{U}_{\mathcal{R}}^\ast) = J_{\mathcal{R}}^\ast$. Let $(t,x)
\mapsto \overline{\mathcal{U}}(t,x)$ be the optimal control policy
for $\Sigma$ given by Lemma~\ref{lem1}. Define a control policy for
$\Sigma_{\mathcal{R}}$ by $(t,x_{\mathcal{R}}) \mapsto
\overline{\mathcal{U}}_{\mathcal{R}}(t, x_{\mathcal{R}}) =
\overline{\mathcal{U}}(t,x)$, where $x_{\mathcal{R}} = C x$. Then
$\overline{\mathcal{U}}_{\mathcal{R}}$ is well defined since $C x =
C x'$ implies that $\overline{\mathcal{U}}(t, x) =
\overline{\mathcal{U}}(t, x')$. Let $J(x_0, \overline{\mathcal{U}})$
and $J_{\mathcal{R}}(x_{\mathcal{R}}^0,
\overline{\mathcal{U}}_{\mathcal{R}})$ be the corresponding values
of $J$ and $J_{\mathcal{R}}$ respectively. We have
$J_{\mathcal{R}}(x_{\mathcal{R}}^0,
\overline{\mathcal{U}}_{\mathcal{R}}) = J(x_0,
\overline{\mathcal{U}}) = J^\ast$, by Lemma~\ref{lem2}. Since
$\overline{\mathcal{U}}$ minimizes $J$ given the initial state
$x_0$, it follows that $J(x_0, \overline{\mathcal{U}}) \leq J(x_0,
\mathcal{U}^\ast)$, and thus $J^\ast \leq J_{\mathcal{R}}^\ast$. On
the other hand, $J_{\mathcal{R}}(x_{\mathcal{R}}^0,
\mathcal{U}_{\mathcal{R}}^\ast) \leq
J_{\mathcal{R}}(x_{\mathcal{R}}^0,
\overline{\mathcal{U}}_{\mathcal{R}})$ since
$\mathcal{U}_{\mathcal{R}}^\ast$ minimizes $J_{\mathcal{R}}$ for
given $x_{\mathcal{R}}^0$. Thus, $J_{\mathcal{R}}^\ast \leq J^\ast$
and, hence, they are equal. It is also clear that $\mathcal{U}^\ast$
is an optimal control law since $J(x_0, \mathcal{U}^\ast) =
J_{\mathcal{R}}^\ast = J^\ast$.
\end{proof}

\begin{example} \label{eg3}
To give an intuitive example of the proposed equivalence relation
for solving the optimal control problem, consider again the PBN in
Example~\ref{eg1}. Suppose that the functions $l(u, x)$ and $g(x)$
are given by
\begin{align*}
&l(\delta_2^1, \delta_8^1) = \cdots = l(\delta_2^1, \delta_8^4) = 1,
\; l(\delta_2^1, \delta_8^5) = \cdots = l(\delta_2^1, \delta_8^8)
= 2 , \\
&l(\delta_2^2, x) = 3 , \quad x \in \Delta_8 , \\
&g(\delta_8^1) = 1, \quad g(\delta_8^2) = \cdots = g(\delta_8^8) = 2
.
\end{align*}
Then condition (\ref{eq8a}) defines an equivalence relation
$\mathcal{S}$ corresponding to the partition $\{ \{\delta_8^1\}, \,
\{\delta_8^2, \delta_8^3, \delta_8^4\}, \, \{\delta_8^5, \delta_8^6,
\delta_8^7, \delta_8^8\} \}$. Let $\mathcal{R}$ be an equivalence
relation which is contained in $\mathcal{S}$ and satisfies
(\ref{eq4}); for example, let $\mathcal{R}$ be the relation given in
Example~\ref{eg1}. It is easily checked that, for any $x, x'$ in the
same equivalence class of $\mathcal{R}$ and for either $u \in
\Delta_2$, we have $g(x) = g(x')$, $l(u,x) = l(u,x')$, and under the
same input the (one-step) transition probability from $x$ to any of
the four equivalence classes of $\mathcal{R}$ is equal to that from
$x'$ to that class. For instance, if $x = \delta_8^2$, $x' =
\delta_8^3$, and the input $u = \delta_2^1$, then $g(x) = g(x') =
2$, $l(u,x) = l(u,x') = 1$, and the transition probability from $x$
to the equivalence class $\{\delta_8^2, \delta_8^3\}$ or from $x$ to
$\{\delta_8^1\}$ is $0.5$, which is also the transition probability
from $x'$ to $\{\delta_8^2, \delta_8^3\}$ or to $\{\delta_8^1\}$
(cf. Fig.~\ref{fig1}). This means that the states belonging to the
same equivalence class of $\mathcal{R}$ have similar properties in
terms of costs and transitions, and then can be amalgamated to form
a quotient.
\end{example}

To conclude the section, we mention that the controller synthesized
via Proposition~\ref{prop1} or \ref{prop2} has a specific structure
in which all states in the same equivalence class are assigned the
same control action. There is therefore an underlying assumption
when applying the quotient-based method, namely that such a
controller exists for the original network. We do not explicitly
mention this assumption in the statement of Propositions~\ref{prop1}
and \ref{prop2}, since it is automatically implied by the conditions
already stated in the propositions. Indeed, it follows from
Proposition~\ref{prop1} that if a PBN $\Sigma$ is stabilizable, then
so is the quotient system $\Sigma_{\mathcal{R}}$, and by inducing a
stabilizing controller for $\Sigma_{\mathcal{R}}$ back to the
original network, one can derive a feedback law that stabilizes
$\Sigma$, showing for $\Sigma$ the existence of a stabilizing
controller with that specific structure. Similarly, we see from
Lemma~\ref{lem1} that under the conditions of
Proposition~\ref{prop2}, there always exists for $\Sigma$ an optimal
controller having that structure. We should note, however, that
these existence results do not ensure that we are always able to
find a stabilizing (or optimal) controller for a PBN on the basis of
another controller designed from a smaller network, since there may
be situations in which there is no equivalence relation satisfying
the hypotheses of Proposition~\ref{prop1} (or \ref{prop2}) except
for the identity relation, yielding a quotient system the same as
the original. Also, note that in the above discussion we do not
require the equivalence relation to be maximal, although that will
be the case in most applications. In practice, for a given PBN, we
may apply Theorem~\ref{thm2} to find the maximal equivalence
relation $\mathcal{R}$ that satisfies the hypotheses of
Proposition~\ref{prop1} (or \ref{prop2}). Such a maximal
$\mathcal{R}$ always exists: in the extreme case, one has
$\mathcal{R}$ equal to the identity relation, which means that no
other equivalence relations exist that satisfy the proposition's
hypotheses. According to the preceding argument, if the PBN is
stabilizable, then it can be stabilized by a feedback that assigns
the same control to any two states related by $\mathcal{R}$. Also,
there exists an optimal controller where the control actions
corresponding to different states related by $\mathcal{R}$ are the
same.

\section{A Biological Example}

\label{sec5}

The \textit{lac} operon in \textit{Escherichia coli} is the system
responsible for the transport and metabolism of lactose. Although
glucose is the preferred carbon source for \textit{E. coli}, the
\textit{lac} operon allows for the effective digestion of lactose
when glucose is not readily available. A Boolean model for the
\textit{lac} operon in \textit{E. coli} was identified in
\cite{veliz2011}. The model consists of $13$ variables ($1$ mRNA,
$5$ proteins, and $7$ sugars) denoted by $M_{lac}$, $P_{lac}$, $B$,
$C_{ap}$, $R$, $R_m$, $A$, $A_m$, $L$, $L_m$, $L_e$, $L_{em}$ and
$G_e$. The Boolean functions of the model are given in
Table~\ref{tab1}.
\begin{table}
\caption{Boolean Functions for the lac Operon Network
\cite{veliz2011}} \label{tab1} \centering
\renewcommand{\arraystretch}{1.25}
\begin{tabular}{p{2.5cm}p{3.5cm}}
\hline Variable & Boolean Function \\
\hline $M_{lac}$ & $C_{ap} \wedge \neg R \wedge \neg R_m$ \\
$P_{lac}$ & $M_{lac}$ \\
$B$ & $M_{lac}$ \\
$C_{ap}$ & $\neg G_e$ \\
$R$ & $\neg A \wedge \neg A_m$ \\
$R_m$ & $(\neg A
\wedge \neg A_m) \vee R$ \\
$A$ & $B \wedge L$ \\
$A_m$ & $L \vee L_m$ \\
$L$ & $P_{lac} \wedge L_e \wedge \neg G_e$ \\
$L_m$ & $((L_{em} \wedge P_{lac}) \vee L_e ) \wedge \neg G_e$ \\
\hline
\end{tabular}
\end{table}
We assume that the concentration of extracellular lactose (indicated
by $L_e$ and $L_{em}$) can be either low or medium,\footnote{The
variables $L_e$ and $L_{em}$ are combined to indicate the
concentration levels of extracellular lactose: the concentration is
low when $(L_e, L_{em}) = (0,0)$, medium when $(L_e, L_{em}) =
(0,1)$, and high when $(L_e, L_{em}) = (1,1)$. The fourth
possibility, $(L_e, L_{em}) = (1,0)$, is meaningless and not
allowed. See \cite{veliz2011} for more information.} causing the
model to appear random. We then arrive at a PBN consisting of two
BNs. The first constituent BN is determined from Table~\ref{tab1}
when $L_e = L_{em} = 0$, and the second constituent BN is determined
by setting $L_e = 0$ and $L_{em} = 1$. The two constituent BNs are
assumed to be equally likely. The concentration level of
extracellular glucose ($G_e$) acts as the control input. The
algebraic representation of the PBN is as in (\ref{eq2}), with $N =
1024$, $M = 2$, and the selection probabilities given by $p_1 = p_2
= 0.5$. The matrices $F_1, F_2 \in \mathcal{L}^{1024 \times 2048}$
are not presented explicitly due to their sizes.

\textit{1) Stabilization.} When extracellular lactose is low, the
\textit{lac} operon model is known to exhibit two steady states
\cite{veliz2011}, expressed in the canonical vector form as
$\delta_{1024}^{912}$ and $\delta_{1024}^{976}$. Let $\mathcal{M} =
\{ \delta_{1024}^{912} \}$ and let $\mathcal{S}$ be the equivalence
relation produced by the partition $\{ \mathcal{M}, \Delta_{1024} -
\mathcal{M} \}$. Then by following the procedure described in
Section~\ref{sec3}, we obtain a quotient system
$\Sigma_{\mathcal{R}}$ with the transition probability matrix given
by
\begin{align*}
&\widetilde{P} = \big[ \delta_{33}^4 \;\; \delta_{33}^{23} \;\;
\delta_{33}^{20} \;\; \delta_{33}^4 \;\; \delta_{33}^6 \;\;
\delta_{33}^4 \;\; \delta_{33}^{23} \;\; \delta_{33}^{23} \;\;
\delta_{33}^{23} \;\; \delta_{33}^{20} \;\; \delta_{33}^4 \;\;
\delta_{33}^{11} \;\; \delta_{33}^4 \\
&\delta_{33}^6 \;\; \delta_{33}^4 \;\; \delta_{33}^{11} \;\;
\delta_{33}^{32} \;\; \delta_{33}^{32} \;\; \delta_{33}^{29} \;\;
\delta_{33}^{13} \;\; \delta_{33}^{11} \;\; \delta_{33}^{15} \;\;
\delta_{33}^{13} \;\; \delta_{33}^{11} \;\; \delta_{33}^{32} \;\;
\delta_{33}^{32} \;\; \delta_{33}^{32} \\
&\delta_{33}^{29} \;\; \delta_{33}^{13} \;\; \delta_{33}^{11} \;\;
\delta_{33}^{15} \;\; \delta_{33}^{13} \;\; \delta_{33}^{11} \;\;
\delta_{33}^1 \;\; \delta_{33}^{23} \;\; \delta_{33}^{18} \;\;
\delta_{33}^1 \;\; \delta_{33}^6 \;\; \delta_{33}^4 \;\; 0.5
\delta_{33}^{22} + \\
&0.5 \delta_{33}^{23} \;\, 0.5 \delta_{33}^{23} + 0.5
\delta_{33}^{24} \;\, \delta_{33}^{17} \;\, 0.5 \delta_{33}^{18} +
0.5 \delta_{33}^{19} \;\, \delta_{33}^2 \;\, 0.5 \delta_{33}^3 + 0.5
\delta_{33}^9 \\
& 0.5 \delta_{33}^1 + 0.5 \delta_{33}^5 \:\, 0.5 \delta_{33}^6 + 0.5
\delta_{33}^{24} \:\, 0.5 \delta_{33}^4 + 0.5 \delta_{33}^5 \:\, 0.5
\delta_{33}^{11} + 0.5 \delta_{33}^{21} \\
& \delta_{33}^{32} \;\; \delta_{33}^{25} \;\; \delta_{33}^{27} \;\;
\delta_{33}^7 \;\; \delta_{33}^9 \;\; \delta_{33}^{15} \;\;
\delta_{33}^{13} \;\; \delta_{33}^{11} \;\; 0.5 \delta_{33}^{31} +
0.5 \delta_{33}^{32} \;\; 0.5 \delta_{33}^{32} + \\
& 0.5 \delta_{33}^{33} \;\: 0.5 \delta_{33}^{25} + 0.5
\delta_{33}^{26} \;\: 0.5 \delta_{33}^{27} + 0.5 \delta_{33}^{28}
\;\: 0.5 \delta_{33}^{7} + 0.5 \delta_{33}^{8} \;\: 0.5
\delta_{33}^{9} \\
&+ 0.5 \delta_{33}^{10} \;\; 0.5 \delta_{33}^{15} + 0.5
\delta_{33}^{16} \;\; 0.5 \delta_{33}^{13} + 0.5 \delta_{33}^{14}
\;\; 0.5 \delta_{33}^{11} + 0.5 \delta_{33}^{12} \big] .
\end{align*}
Note that the quotient system $\Sigma_{\mathcal{R}}$ has $33$ states
which is about $3\%$ of the number of states of the original PBN.
The matrix $C$ obtained during the procedure (which is of size $33
\times 1024$ and not shown explicitly) satisfies $C
\delta_{1024}^{912} = \delta_{33}^1$. It is easy to see (by the
method of \cite{r.li2014b}) that the quotient system
$\Sigma_{\mathcal{R}}$ can be stabilized to $\delta_{33}^1$ with
probability one via the feedback law $x_{\mathcal{R}} \mapsto K
x_{\mathcal{R}}$, where $K \in \mathcal{L}^{2 \times 33}$ has
$\delta_{2}^2$ as the first and fourth columns and $\delta_{2}^1$ as
its other columns. Proposition~\ref{prop1} then ensures that the
feedback law $x \mapsto \mathcal{U}(x) = KCx$ stabilizes the
original PBN to the state $\delta_{1024}^{912}$, with probability
one. Specifically, this controller is given as: $\mathcal{U}(x) =
\delta_2^2$ if $x\in \{ \delta_{1024}^{784}, \delta_{1024}^{800},
\delta_{1024}^{816}, \delta_{1024}^{848}, \delta_{1024}^{864},
\delta_{1024}^{880}, \delta_{1024}^{896}, \delta_{1024}^{912},
\delta_{1024}^{928}, \delta_{1024}^{944},$ $\delta_{1024}^{976},
\delta_{1024}^{992}, \delta_{1024}^{1008}, \delta_{1024}^{1024}\}$
and $\mathcal{U}(x) = \delta_2^1$ otherwise. A similar argument can
be made for finding a feedback controller that stabilizes the PBN to
the state $\delta_{1024}^{976}$; the details are not repeated here.

\textit{2) Optimal control.} Assume that $T = 10$, $x_0 =
\delta_{1024}^1$, and the functions $l(u,x)$ and $g(x)$ are given by
\begin{align*}
l(\delta_2^1, x) &= 1, \quad l(\delta_2^2, x) = 0 , \quad x \in
\Delta_{1024} , \\
g(\delta_{1024}^1) &= \cdots = g(\delta_{1024}^{128}) = 3 , \quad
g(\delta_{1024}^{129}) = \cdots = g(\delta_{1024}^{1024}) = 6 .
\end{align*}
Here we mention that $\delta_{1024}^1 , \ldots, \delta_{1024}^{128}$
are exactly the states corresponding to the \textit{lac} operon
being ON (cf. \cite{veliz2011}). The above choice of $g(x)$ then
indicates that ON states are more desirable. By proceeding as in
Section~\ref{sec4.2}, one can obtain a quotient system
$\Sigma_{\mathcal{R}}$ with the transition probability matrix given
by
\begin{align*}
&\widetilde{P} = \big[ \delta_{25}^{16} \;\: \delta_{25}^{16} \;\:
\delta_{25}^{16} \;\: \delta_{25}^{16} \;\: \delta_{25}^{16} \;\:
\delta_{25}^{24} \;\: \delta_{25}^{24} \;\: \delta_{25}^{24} \;\:
\delta_{25}^{16} \;\: \delta_{25}^{24} \;\: \delta_{25}^{24} \;\:
\delta_{25}^{24} \;\: \delta_{25}^{24} \\
&\delta_{25}^{16} \;\; \delta_{25}^{16} \;\; \delta_{25}^{16} \;\;
\delta_{25}^{16} \;\; \delta_{25}^{24} \;\; \delta_{25}^{24} \;\;
\delta_{25}^{24} \;\; \delta_{25}^{24} \;\; \delta_{25}^{16} \;\;
\delta_{25}^{16} \;\; \delta_{25}^{16} \;\; \delta_{25}^{16} \;\;
\delta_{25}^7 \;\; \delta_{25}^6 \\
& 0.5 \delta_{25}^7 + 0.5 \delta_{25}^8\;\; 0.5 \delta_{25}^1 + 0.5
\delta_{25}^2 \;\; 0.5 \delta_{25}^9 + 0.5 \delta_{25}^{16} \;\;
\delta_{25}^{24} \;\; \delta_{25}^{18} \;\; \delta_{25}^{20} \\
& \delta_{25}^2 \;\; 0.5 \delta_{25}^{23} +0.5 \delta_{25}^{24} \;\;
0.5 \delta_{25}^{24} + 0.5 \delta_{25}^{25} \;\; 0.5
\delta_{25}^{18} + 0.5 \delta_{25}^{19} \;\; 0.5 \delta_{25}^{20} +
\\
& 0.5 \delta_{25}^{21} \;\; 0.5 \delta_{25}^2 + 0.5 \delta_{25}^3
\;\; 0.5 \delta_{25}^5 + 0.5 \delta_{25}^{16} \;\; \delta_{25}^{16}
\;\; 0.5 \delta_{25}^{4} + 0.5 \delta_{25}^{16} \\
& 0.5 \delta_{25}^{23} + 0.5 \delta_{25}^{24} \;\; 0.5
\delta_{25}^{24} + 0.5 \delta_{25}^{25} \;\; 0.5 \delta_{25}^{18} +
0.5 \delta_{25}^{19} \;\; 0.5 \delta_{25}^{20} + \\
& 0.5 \delta_{25}^{21} \;\; 0.5 \delta_{25}^2 + 0.5 \delta_{25}^3
\;\; 0.5 \delta_{25}^5 + 0.5 \delta_{25}^{16} \;\; \delta_{25}^{16}
\;\; 0.5 \delta_{25}^4 + 0.5 \delta_{25}^{16} \big] .
\end{align*}
Note that the size of $\Sigma_{\mathcal{R}}$ is less than $2.5 \%$
when compared to the original model. The matrix $C$ satisfies $C x_0
= \delta_{25}^{25}$, and the induced functions $l_{\mathcal{R}}$ and
$g_\mathcal{R}$ are defined by
\begin{align*}
&l_{\mathcal{R}}(\delta_2^1, x_{\mathcal{R}}) = 1, \quad
l_{\mathcal{R}}(\delta_2^2, x_{\mathcal{R}}) = 0, \quad
x_{\mathcal{R}} \in \Delta_{25} , \\
&g_{\mathcal{R}}(\delta_{25}^1) = \cdots =
g_{\mathcal{R}}(\delta_{25}^{17}) = 6 , \quad
g_{\mathcal{R}}(\delta_{25}^{18}) = \cdots =
g_{\mathcal{R}}(\delta_{25}^{25}) = 3 .
\end{align*}
It is not hard to see that\footnote{Similarly as in the proof of
Lemma~\ref{lem1}, the optimal control problem for
$\Sigma_{\mathcal{R}}$ can be solved by the following dynamic
programming algorithm, which proceeds backward in time from $t = 10$
to $t = 0$ (see, e.g., \cite{datta2003,bertsekasbook}):
\begin{align*}
&H(10, x_{\mathcal{R}}) = g_{\mathcal{R}}(x_{\mathcal{R}}) , \quad
x_{\mathcal{R}} \in \Delta_{25} , \\
&H(t, x_{\mathcal{R}}) = \min_{u \in \Delta_2}
G(t,x_{\mathcal{R}},u) = \min_{u \in \Delta_2} \bigg\{\sum_{\xi \in
\Delta_{25}} H(t+1, \xi) \xi^{\top} \widetilde{P}(u) x_{\mathcal{R}} \\
&\qquad \qquad \quad + l_{\mathcal{R}}(u,x_{\mathcal{R}}) \bigg\} ,
\quad x_{\mathcal{R}} \in \Delta_{25} , \quad t = 9, 8, \ldots, 0,
\end{align*}
where $\widetilde{P}(u) = \widetilde{P} \ltimes u$ for $u \in
\Delta_2$. The optimal control law is obtained as
$\mathcal{U}_{\mathcal{R}}^\ast (t,x_{\mathcal{R}}) = \arg \min_{u
\in \Delta_2} G(t,x_{\mathcal{R}},u)$, and the optimal cost starting
from the initial state $x_{\mathcal{R}}^0$ is given by $H(0,
x_{\mathcal{R}}^0)$. Clearly, different initial states may have
different optimal values associated with them. For example, here a
direct computation shows that $H(0, \delta_{25}^{25}) = 5.9063$ and
$H(0, \delta_{25}^1) = 6$. Thus the optimal cost for the initial
state $x_{\mathcal{R}}^0 = \delta_{25}^{25}$ is $5.9063$, while that
for the initial state $x_{\mathcal{R}}^0 = \delta_{25}^1$ is $6$.}
the constant control $u = \delta_2^2$ is optimal for
$\Sigma_{\mathcal{R}}$, with the optimal cost $J_{\mathcal{R}}^\ast
= 5.9063$ (to which corresponds $x_{\mathcal{R}}^0 =
\delta_{25}^{25}$). Thus, by virtue of Proposition~\ref{prop2}, this
constant input also solves the optimal control problem for the
original PBN, and the optimal cost corresponding to the initial
state $x_0 = \delta_{1024}^1$ is $J^\ast = J_{\mathcal{R}}^\ast =
5.9063$.

\section{Summary}

\label{sec6}

We considered quotients for PBNs in the exact sense that the notion
is used in the control community. Specifically, we considered a
probabilistic transition system generated by the PBN. The
corresponding quotient transition system then captures the quotient
dynamics of the PBN. We thus proposed a method of constructing a
probabilistic Boolean system that generates the transitions of the
quotient transition system. It is not surprising that the
equivalence relation should satisfy certain constraints so that the
quotient dynamics can indeed be generated from a Boolean system. We
then developed a procedure converging in a finite number of
iterations to a satisfactory equivalence relation. Finally, a
discussion on the use of quotient systems for control design was
given, and an application of the proposed results to stabilization
and optimal control was presented. As a result, it is concluded that
the control problems of the original PBN can be boiled down to those
of the quotient systems. That is, instead of deriving control
polices directly on the original network, which could be
computationally expensive, one can design control polices on the
quotient and subsequently induce the control polices back to the
original PBN.

\section*{Appendix}

\begin{proof}[Proof of Lemma~\ref{lem1a}]
First, note that $\mathcal{Z}_j \supseteq \mathcal{Z}_{j-1}$. In
fact, since $\mathcal{M}^\ast = \mathcal{M}_\iota =
\mathcal{M}_\iota \cap \mathcal{A}(\mathcal{M}_\iota)$, we have
$\mathcal{Z}_0 \subseteq \mathcal{Z}_1$, and if $\mathcal{Z}_{j-1}
\subseteq \mathcal{Z}_j$, then for any $a \in \Delta_N$ such that
$\sum_{x \in \mathcal{Z}_{j-1}} x^\top P(u) a = 1$ for some $u \in
\Delta_M$, we have $\sum_{x \in \mathcal{Z}_j} x^\top P(u) a = 1$,
and thus $\mathcal{Z}_j \subseteq \mathcal{Z}_{j+1}$.

Now, suppose that there exists a control law $\mathcal{U} \colon
\Delta_N \rightarrow \Delta_M$ that stabilizes the PBN to
$\mathcal{M}$ with probability one. We first show that for every $k
\geq 1$,
\begin{equation} \label{eq17}
x_0 \in \Delta_N \;\; \text{and} \;\; \sum_{x \in \mathcal{M}^\ast}
x^\top P_{\mathcal{U}}^k x_0 = 1 \Rightarrow x_0 \in \mathcal{Z}_k .
\end{equation}
We use induction on $k$. The case $k = 1$ is trivial, so we proceed
to the induction step. If $\sum_{x \in \mathcal{M}^\ast} x^\top
P_{\mathcal{U}}^k x_0 = 1$, then since
\begin{equation*}
\sum_{x \in \mathcal{M}^\ast} x^\top P_{\mathcal{U}}^k x_0 = \sum_{b
\in \Delta_N} \bigg(\sum_{x \in \mathcal{M}^\ast}x^\top
P_{\mathcal{U}}^{k-1}b \bigg) b^\top P_{\mathcal{U}}x_0
\end{equation*}
and since $\sum_{b \in \Delta_N} b^\top P_{\mathcal{U}} x_0 = 1$, we
have
\begin{equation*}
b \in \Delta_N \;\; \text{and} \;\; b^\top P_{\mathcal{U}} x_0 > 0
\Rightarrow \sum_{x \in \mathcal{M}^\ast} x^\top
P_{\mathcal{U}}^{k-1} b = 1 ,
\end{equation*}
and so by the induction hypothesis,
\begin{equation*}
b \in \Delta_N \;\; \text{and} \;\; b^\top P_{\mathcal{U}} x_0 > 0
\Rightarrow b \in \mathcal{Z}_{k-1} .
\end{equation*}
Consequently,
\begin{align*}
&\sum_{x \in \mathcal{Z}_{k-1}} x^\top P(\mathcal{U}(x_0)) x_0 =
\sum_{x \in \mathcal{Z}_{k-1}} x^\top P_{\mathcal{U}} x_0 \\
&\qquad \geq \sum_{\{x \colon x^\top P_{\mathcal{U}}x_0 > 0\}}
x^\top P_{\mathcal{U}}x_0 = \sum_{x \in \Delta_N} x^\top
P_{\mathcal{U}}x_0 = 1 .
\end{align*}
This shows that $x_0 \in \mathcal{Z}_k$.

Let $x_0 \in \Delta_N$. Since the feedback $\mathcal{U} \colon
\Delta_N \rightarrow \Delta_M$ stabilizes the PBN to $\mathcal{M}$
with probability one, there is $\tau \geq 0$ such that $\sum_{x \in
\mathcal{M}} x^\top P_{\mathcal{U}}^k x_0 = 1$ for all $k \geq
\tau$. Fix $k \geq \tau$. Since
\begin{equation*}
\sum_{b \in \Delta_N} \bigg(\sum_{x \in \mathcal{M}}x^\top
P_{\mathcal{U}}b \bigg) b^\top P_{\mathcal{U}}^k x_0 = \sum_{x \in
\mathcal{M}}x^\top P_{\mathcal{U}}^{k+1} x_0 = 1
\end{equation*}
and since $\sum_{b \in \Delta_N} b^\top P_{\mathcal{U}}^k x_0 = 1$,
we see that
\begin{equation*}
b \in \Delta_N \;\; \text{and} \;\; b^\top P_{\mathcal{U}}^k x_0 > 0
\Rightarrow \sum_{x \in \mathcal{M}} x^\top P_{\mathcal{U}} b = 1
\Rightarrow b \in \mathcal{A}(\mathcal{M}_0) .
\end{equation*}
Hence,
\begin{align*}
\sum_{x \in \mathcal{A}(\mathcal{M}_0)} x^\top P_{\mathcal{U}}^k x_0
\geq \sum_{\{x \colon x^\top P_{\mathcal{U}}^k x_0 > 0\}} x^\top
P_{\mathcal{U}}^k x_0 &= \sum_{x \in \Delta_N} x^\top
P_{\mathcal{U}}^k x_0 \\
&= 1 ,
\end{align*}
so that
\begin{multline*}
\sum_{x \in \mathcal{M}_1} x^\top P_{\mathcal{U}}^k x_0 = \sum_{x
\in \mathcal{M}_0} x^\top P_{\mathcal{U}}^k x_0 + \sum_{x \in
\mathcal{A}(\mathcal{M}_0)} x^\top P_{\mathcal{U}}^k x_0 \\
- \sum_{x \in \mathcal{M}_0 \cup \mathcal{A}(\mathcal{M}_0)} x^\top
P_{\mathcal{U}}^k x_0 \geq 1 .
\end{multline*}
This implies that $\sum_{x \in \mathcal{M}_1} x^\top
P_{\mathcal{U}}^k x_0 = 1$, for any $k \geq \tau$. In the same way
and by a simple induction argument, we obtain $\sum_{x \in
\mathcal{M}^\ast} x^\top P_{\mathcal{U}}^k x_0 = 1$ for $k \geq
\tau$, and therefore, by (\ref{eq17}) $x_0 \in \mathcal{Z}_k$ for
all $k \geq \tau$. This implies that $\mathcal{Z}_\lambda =
\Delta_N$ for sufficiently large $\lambda$.

Conversely, suppose that $\mathcal{Z}_\lambda = \Delta_N$ for some
$\lambda \geq 1$. Let $\mathcal{Z}_1' = \mathcal{Z}_1$ and
$\mathcal{Z}_j' = \mathcal{Z}_j - \mathcal{Z}_{j-1}$ for $j = 2,3,
\ldots, \lambda$. For every $x \in \Delta_N$, we find a unique
$\mathcal{Z}_j'$ containing $x$ and then pick $u_x \in \Delta_M$
such that $\sum_{b \in \mathcal{Z}_{j-1}}b^\top P(u_x)x = 1$. We
show that the feedback given by $\mathcal{U} \colon x \mapsto u_x$
stabilizes the PBN to $\mathcal{M}$ with probability one. Since
$\mathcal{Z}_\lambda = \Delta_N$ and $\mathcal{M}^\ast \subseteq
\mathcal{M}$, it suffices to show that for $1 \leq j \leq \lambda$,
\begin{equation} \label{eq18}
x_0 \in \mathcal{Z}_j \;\; \text{and} \;\; k \geq j \Rightarrow
\sum_{x \in \mathcal{M}^\ast} x^\top P_{\mathcal{U}}^k x_0 = 1 .
\end{equation}
We use induction on $j$. By the definition of $\mathcal{U}$, we have
$\sum_{x \in \mathcal{M}^\ast} x^\top P_{\mathcal{U}} x_0 = 1$ for
all $x_0 \in \mathcal{Z}_1$. If $k \geq 2$ and if $\sum_{x \in
\mathcal{M}^\ast} x^\top P_{\mathcal{U}}^{k-1} x_0 = 1$ for all $x_0
\in \mathcal{Z}_1$, then for fixed $x_0$ we have
\begin{align*}
\sum_{x \in \mathcal{M}^\ast} x^\top P_{\mathcal{U}}^k x_0 &\geq
\sum_{b \in {\mathcal{M}^\ast}} \bigg(\sum_{x \in
\mathcal{M}^\ast}x^\top P_{\mathcal{U}}b \bigg) b^\top
P_{\mathcal{U}}^{k-1} x_0 \\
&= \sum_{b \in {\mathcal{M}^\ast}} b^\top P_{\mathcal{U}}^{k-1} x_0
= 1 .
\end{align*}
Thus (\ref{eq18}) holds for $j = 1$. To prove the induction step,
assume that $j \geq 2$ and (\ref{eq18}) is true for $j - 1$. Let
$x_0 \in \mathcal{Z}_j$, $k \geq j$, and we show that $\sum_{x \in
\mathcal{M}^\ast} x^\top P_{\mathcal{U}}^k x_0 = 1$. This is clear
if $x_0 \in \mathcal{Z}_{j-1}$, by the induction hypothesis; so
suppose $x_0 \in \mathcal{Z}_j'$. Then, by the definition of
$\mathcal{U}$, we obtain $\sum_{b \in \mathcal{Z}_{j-1}} b^\top
P_{\mathcal{U}} x_0 = 1$. Note that
\begin{equation} \label{eq19}
\sum_{x \in \mathcal{M}^\ast} x^\top P_{\mathcal{U}}^k x_0 \geq
\sum_{b \in {\mathcal{Z}_{j-1}}} \bigg(\sum_{x \in
\mathcal{M}^\ast}x^\top P_{\mathcal{U}}^{k-1} b \bigg) b^\top
P_{\mathcal{U}} x_0 .
\end{equation}
Since by the induction assumption $\sum_{x \in \mathcal{M}^\ast}
x^\top P_{\mathcal{U}}^{k-1} b = 1$ for all $b \in
\mathcal{Z}_{j-1}$, the right-hand side of (\ref{eq19}) is equal to
$\sum_{b \in {\mathcal{Z}_{j-1}}} b^\top P_{\mathcal{U}} x_0 = 1$.
This completes the induction step and hence the proof.
\end{proof}

\end{document}